\documentclass[11pt,reqno]{amsart}
\usepackage[a4paper]{geometry}
\usepackage[utf8]{inputenc}

\usepackage[english]{babel}
\usepackage{amsmath,amssymb,amsfonts,amsthm}
\usepackage{mathrsfs}
\usepackage{colonequals} 

\usepackage{graphicx}
\usepackage{layout}

\usepackage[dvipsnames]{xcolor}
\colorlet{LightRubineRed}{RubineRed!70!}
\colorlet{Mycolor1}{green!10!orange!90!}
\definecolor{Mycolor2}{HTML}{00F9DE}

\usepackage{multicol}
\usepackage{enumitem}

\usepackage{amssymb}
\usepackage{fancybox}
\usepackage{fancyhdr}
\usepackage{times}

\usepackage{verbatim}
\usepackage[pdftex,bookmarks,colorlinks,linkcolor=blue]{hyperref}

\usepackage{setspace}
\usepackage{lmodern}
\usepackage{colortbl}

\newcommand{\leqs}{\leqslant}
\newcommand{\geqs}{\geqslant}

\numberwithin{equation}{section}

\newcommand{\D}{\mathbb{D}}

\newcommand{\cD}{{\mathcal{D}}}
\newcommand{\cS}{{\mathcal{S}}}
\newcommand{\cR}{{\mathcal{R}}}

\newcommand{\cA}{{\mathcal{A}}}

\newcommand{\cK}{{\bf \mathcal{K}}}

\newcommand{\beqa}{\begin{eqnarray*}}
	\newcommand{\bea}{\begin{eqnarray}}
	\newcommand{\eeqa}{\end{eqnarray*}}
	\newcommand{\eea}{\end{eqnarray}}

	\newtheorem{lem}{Lemma}[section]
	\newtheorem{cor}{Corollary}[section]
	\newtheorem{theo}{Theorem}[section]

	\title[Integration operators on Hardy and Bergman spaces]{Integration operators on Hardy and Bergman spaces}
	\date{\today}
	\author[O. El Fallah, F. Mkadmi, Y. Omari]{O. El-Fallah, F. Mkadmi, Y. Omari}
	\keywords{Integration operator, Hardy space, Bergman spaces, Toeplitz operator, Carleson measure}
	\address{Laboratory of Mathematic Analysis and Applications, Faculty of Sciences, Mohammed V University in Rabat, 4 Av. Ibn Battouta, Morocco.}
	\email{omar.elfallah@fsr.um5.ac.ma, fouzia.mkadmii@gmail.com, omariysf@gmail.com}
	
	\keywords{Bergman space, Hardy space, Integration operator, Toeplitz operator, Singular values.}
	\subjclass[2010]{Primary 47B35, Secondary 30H10, 30H20}		
	\begin{document}
		\maketitle
		\begin{abstract}    
			In the present work, we are interested in compact integration operators
			$I_g f(z) = \int _0^z f(\zeta)g'(\zeta)d\zeta$ acting on the Hardy space $H^2$ and on the weighted Bergman spaces $\cA^2_\alpha$. We give upper and lower estimates for the singular values of $I_g$.
		\end{abstract}
		\maketitle
		
		\section{Introduction}
		Let $\mathbb{D}$ be the open unit disc, and let $\mathbb{T} \colonequals  \partial \mathbb{D}$  be the unit circle. The Hardy space $H^2$ consists  of analytic functions $f$ on $\mathbb{D}$ such that 
		\[
		\| f\| _{H^2} \colonequals\left(\sup_{0<r<1}\int_0^{2\pi} |f(re^{i\theta})|^2  \dfrac{d\theta}{2\pi}\right)^{1/2}<\infty.
		\]
		For an analytic function $g$ on $\mathbb{D}$, we define the linear transformation 
		\[
		I_gf(z) =\displaystyle \int _0^z f(\zeta)g'(\zeta)d\zeta,\quad f\in H^2.
		\]
		These operators have been introduced by Pommerenke \cite{Pom} who proved that $I_g$ is bounded on $H^2$ if and only if $g$ is of bounded mean oscillation, that is, $g \in BMOA$. The compactness and membership to Schatten classes have been studied by Aleman and Siskakis in \cite{AS1}. They proved that $I_g$ is compact if and only if $g$ is in the space of analytic functions of vanishing mean oscillation  $VMOA$, and that $I_g $ belongs to the Schatten class $\mathcal{S}_p(H^2)$, for $p>1$, if and only if $g$ is in the Besov space $B_p$ given by
		\[
		B_p=\left  \{ g \in H^2:\ \displaystyle \int _\mathbb{D}\left ( ( 1-|z|^2)|g'(z)| \right )^p\frac{dm(z)}{(1-|z|^2)^2} \right \},
		\]
		where $m$ denotes the normalized Lebesgue measure on $\mathbb{D}$. They also proved that if ${I_g} \in \mathcal{S}_1$ then $g'=0$.\\

		The weighted Bergman space $\cA^2_\alpha$, with  $\alpha>-1$, is the space of analytic functions $f$ on $\mathbb{D}$ for which
		\[
		\|f\|_\alpha \colonequals \left((\alpha+1)\displaystyle \int_\mathbb{D} |f(z)|^2(1-|z|^2)^\alpha dm(z)\right)^{1/2} <\infty.
		\]
		In \cite{AS2}, Aleman and  Siskakis  considered  the operator $I_g$ acting on weighted Bergman spaces. They proved that $I_g$  is bounded on $\mathcal{A}^2_\alpha$ (respectively compact) if and only if $g$ is in the Bloch space $\mathcal{B}$ (respectively in the little Bloch space $\mathcal{B}_0$), and that  $I_g$ is in the class $\mathcal{S}_p(\mathcal{A}^2_\alpha)$, for $p > 1$, if and only if $g$ is in the space $B_p$. For further information about  
		the spaces used in this paper, we refer the reader to the monograph  \cite{Zhu}.\\
		
		This paper is devoted to the study of the asymptotic behavior of the singular values of compact integration operators acting on  $H^2$ or on  $\cA^2_\alpha$.  
		Before stating our main results, we give some basic notations. 
		\\
		\\
		The Carleson window (or box) associated with an arc $I\subset \mathbb{T}$ is defined by 
		\[
		W(I)=\left\lbrace z\in\mathbb{D}:1-|z|\leqs | I | /2\pi,\ z/|z|\in I\right\rbrace,
		\]
		where $|I|$ denotes the length of $I$. 
		Let $R(I)$ denote the  inner half of $W(I)$ which is given by  
		\[
		R(I)= \left\{ z\in \mathbb{D}:\ |I|/4\pi <1-|z|\leqs  | I | /2\pi ,z/|z| \in I \right\}.
		\]
		The family  $\left\{ R(I_{n,k})\right\} _{n,k}$ constitutes  a pairwise disjoint covering of $\mathbb{D}$, where
		\[
		I_{n,k} \colonequals \lbrace e^{i\theta}:2\pi k/2^n\leqs \theta <2\pi(k+1)/2^n\rbrace,\quad n\geqs 1\ \mbox{and}\ 1\leqs k\leqs 2^n,
		\]
		are the dyadic arcs.\\ \\
		The {Bloch} space $\mathcal{B}$ (respectively the little {Bloch} space $\mathcal{B}_0$) consists of analytic functions $g$ on $\mathbb{D}$ such that
		\[
		\displaystyle \sup _{z\in \mathbb{D}}(1-|z|^2)|g'(z)| <\infty \quad (\text{respectively } \displaystyle \lim _{|z| \to 1}(1-|z|^2)|g'(z)|=0).
		\]
		{Let $dm_g(z) = |g'(z)|^2 (1-|z|^2)^2dm(z)$}. It is established in \cite{AS2} that $g$ is in the space $\mathcal{B}$ if and only if
		\[ 
		\frac{m_g(R(I_{n,k}))}{{m(R(I_{n,k}))}}=O(1),
		\]
		and that $g$ is in the space $\mathcal{B}_0$ if and only if 
		\begin{equation}\label{00}
		\displaystyle \lim_{n\to \infty} \sup _{1\leqs k \leqs 2^n}\frac{m_g(R(I_{n,k}))}{{m(R(I_{n,k}))}}=0.
		\end{equation}
		Hence, $I_g$ is compact on $\cA^2_\alpha$ if and only if  $g$ satisfies assumption (\ref{00}). In this case, {let $\left(I_{n}\right)_{n\geqs 1}$ be an enumeration of $\left(I_{n,k}\right)_{n,k}$ so that the sequence 
			\[a_n(m_g) \colonequals \frac{m_g(R(I_{n}))}{{m(R(I_{n}))}}
			\] 
			is  nonincreasing.}
		\\
		\\
		Throughout this paper, the notation $A\lesssim B$ stands for $A\leqs cB$ with $c$ is a certain positive constant, and the notation $A\asymp B$ is used if both $A\lesssim B$ and $B\lesssim A$ hold.
		\\
		
		First, we state the main result of this paper in the case of  Bergman spaces.
		\begin{theo}\label{Bergman}
			Let $g\in \mathcal{B}_0$. There exists an absolute integer $p$ such that
			\begin{eqnarray}
			\sqrt{a_{pn}(m_g)} \lesssim s_{n}(I_g)\lesssim \sqrt{a_{[\frac{n}{p}]}(m_g)},\quad n\geqs 1,
			\end{eqnarray}
			where  $[\frac{n}{p}]$ is the integer part of $\frac{n}{p}$.
		\end{theo}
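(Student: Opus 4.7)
My plan is a two-step reduction: first from the singular values of $I_g$ to the eigenvalues of a positive Toeplitz operator $T$, and then from those eigenvalues to the rearranged dyadic averages $a_n(m_g)$, via a dyadic decomposition together with an almost-orthogonal system of normalized reproducing kernels.

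\textbf{Step 1: reduction to a Toeplitz operator.} The starting point is the standard Littlewood--Paley description of the Bergman norm,
\[
\|h\|_\alpha^2 \asymp |h(0)|^2 + \int_\D |h'(z)|^2 (1-|z|^2)^{\alpha+2}\,dm(z),
\]
which, combined with $I_gf(0)=0$ and $(I_gf)'=fg'$, yields
\[
\|I_g f\|_\alpha^2 \asymp \int_\D |f(z)|^2 (1-|z|^2)^\alpha\,dm_g(z) = \langle T f,f\rangle_\alpha,
\]
where $T$ is the positive Toeplitz operator on $\cA^2_\alpha$ associated with the measure $(1-|z|^2)^\alpha\,dm_g$. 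The Courant--Fischer min-max principle then gives $s_n(I_g)^2\asymp \lambda_n(T)$, and the theorem reduces to two-sided bounds on $\lambda_n(T)$ in terms of $a_n(m_g)$.

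\textbf{Step 2: two-sided eigenvalue estimates.} For the upper bound, fix $n$, set $t=a_{[n/p]}(m_g)$, and split $m_g=\mu_{\text{bad}}+\mu_{\text{good}}$ by restriction to the $[n/p]$ dyadic half-boxes with the largest averages. The good part produces a Toeplitz operator of norm $\lesssim t$ by the Luecking-type Carleson criterion in \cite{Zhu}, while the bad part is a sum of $[n/p]$ pieces, each approximable within norm $\lesssim t$ by a rank-$p_0$ operator supported on a single dyadic half-box (for an absolute $p_0$, coming from the finite-dimensional approximation of atomic Toeplitz blocks by reproducing kernels at their centers). Consequently $T$ is within norm $\lesssim t$ of an operator of rank at most $p_0[n/p]$; choosing $p\geqs p_0$ and applying min-max gives $\lambda_n(T)\lesssim t=a_{[n/p]}(m_g)$. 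For the lower bound, pick the $pn$ half-boxes with the largest averages; by a pigeonhole coloring of the dyadic tree, extract $n$ indices whose centers $z_1,\dots,z_n$ are pairwise hyperbolically well separated. The pointwise size of the Bergman reproducing kernel on $R(I)$ gives $\langle T\widehat k_{z_i},\widehat k_{z_i}\rangle_\alpha\gtrsim a_{pn}(m_g)$ for every $i$, while the hyperbolic separation forces $\{\widehat k_{z_i}\}$ to be a Riesz system with uniform bounds. A Gram-matrix min-max argument then yields $\lambda_n(T)\gtrsim a_{pn}(m_g)$.

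\textbf{Main obstacle.} The delicate point is the lower bound: the absolute integer $p$ must be chosen simultaneously large enough to absorb the rank overhead in the upper bound and to guarantee uniform hyperbolic separation after pigeonholing, with all reproducing-kernel constants controlled independently of $n$ and of $g$. The hypothesis $g\in\mathcal{B}_0$ enters only through \eqref{00}, ensuring that $a_n(m_g)\to 0$ so that the eigenvalue sequence of the compact operator $T$ is well-defined and nonincreasing.
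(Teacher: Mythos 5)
Your overall architecture (reduce $I_g^*I_g$ to a positive Toeplitz operator via Littlewood--Paley, then estimate eigenvalues box by box using reproducing kernels and separation) is the same as the paper's, but the two key spectral steps contain claims that fail as stated. In the upper bound, the assertion that each ``bad'' block $T_{m_g|_{R(I_j)}}$ lies within operator norm $\lesssim t=a_{[n/p]}(m_g)$ of a rank-$p_0$ operator with $p_0$ \emph{absolute} is not justified and is false for general Carleson blocks: such a block is an injective, hence infinite-rank, positive operator whose singular values decay at best geometrically from its norm $\asymp a_{\sigma(j)}(m_g)$, so the error of the best rank-$p_0$ approximation is of order $\delta^{p_0}a_{\sigma(j)}(m_g)$, and the ratio $a_{\sigma(j)}(m_g)/t$ for the top boxes is not bounded by any absolute constant. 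The paper avoids any approximation error by proving a one-sided operator \emph{domination}: a sampling sequence for $\cA^2_{\alpha+2}$ with $N$ points per box together with subharmonicity of $|g'|^2$ gives $I_g^*I_g\lesssim \sum_{i=1}^N T_{\mu_i}$ with each $\mu_i$ purely atomic (one atom of mass $m_g(R(I_j))$ per box); truncating an atomic measure is \emph{exactly} finite rank, and Lemma \ref{separation} plus the Ky Fan inequality finish the job. A two-sided ``approximation'' cannot replace this domination.

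The lower bound has two gaps. First, pairwise hyperbolic separation of the centers does not by itself make $\{\widehat k_{z_i}\}$ a uniform Riesz system in $\cA^2_\alpha$ (the density can be supercritical); the paper instead uses Seip's theorem \cite{Seip2} that a separated set splits into an absolute finite number of \emph{interpolating} sets, which is where the integer $p$ actually comes from. Second, and more seriously, diagonal estimates $\langle T\widehat k_{z_i},\widehat k_{z_i}\rangle\geqs c$ on a Riesz --- even orthonormal --- system do not imply $\lambda_n(T)\gtrsim c$: in the coordinates of an orthonormal system take $T=cn\,P_v$ with $v=n^{-1/2}(1,\dots,1)$; every diagonal entry equals $c$ yet $T$ has rank one. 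To run min--max you must control the off-diagonal entries of the compression of $T$ to $\mathrm{span}\{\widehat k_{z_i}\}$, and for the continuous measure $m_g$ these are contaminated by boxes carrying much larger mass, so diagonal dominance can fail. The paper sidesteps this by first bounding $I_g^*I_g$ from below by an \emph{atomic} Toeplitz operator, with atoms $\xi_j$ placed where $|g'|$ is essentially maximal on $R(I_j)$ and using subharmonicity of $|fg'|^2$ on disjoint pseudohyperbolic discs; the resulting quadratic form $\sum_j m_g(R(I_j))(1-|\xi_j|^2)^\alpha|f(\xi_j)|^2$ is exactly diagonal in the coordinates $\langle f,\widehat k_{\xi_j}\rangle$, and Lemma \ref{interpolation} then tests it on the orthogonal complement, inside the span of the kernels, of the kernels with small weight. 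Your Step 1 and the good-part Carleson estimate are fine; what is missing in both halves is the passage to atomic measures by one-sided operator inequalities rather than norm approximations.
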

		
		An immediate consequence of Theorem \ref{Bergman} is given as follows.
		\begin{cor}\label{Cor_Bergman}
			Let $g\in \mathcal{B}_0$. If $\eta=(\eta(n))_n$ is a nonincreasing sequence of positive numbers such that  $\eta(2n)\asymp \eta(n)$, then
			\[
			s_n^2(I_g) \asymp \eta(n) \Longleftrightarrow a_n(m_g) \asymp \eta(n).
			\]
		\end{cor}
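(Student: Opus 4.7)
The plan is to read this off directly from Theorem~\ref{Bergman}, the only non-trivial input being that the doubling condition $\eta(2n)\asymp \eta(n)$, combined with the monotonicity of $\eta$, promotes itself to the scaling invariance $\eta(cn)\asymp \eta(n)$ for \emph{any} fixed positive constant $c$. Indeed, iterating the hypothesis gives $\eta(2^k n)\asymp \eta(n)$ for each fixed $k\geqs 1$ (with constants depending on $k$), and sandwiching between $\eta(2^{k}n)$ and $\eta(n)$ via monotonicity yields $\eta(pn)\asymp \eta(n)$ as well as $\eta([n/p])\asymp \eta(n)$, where $p$ is the absolute integer furnished by Theorem~\ref{Bergman}.

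Once this is available, the forward implication is immediate: if $a_n(m_g)\asymp \eta(n)$, then the two-sided estimate in Theorem~\ref{Bergman} combined with $\eta(pn)\asymp\eta(n)\asymp \eta([n/p])$ gives $s_n^2(I_g)\asymp \eta(n)$.

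For the converse, suppose $s_n^2(I_g)\asymp \eta(n)$. From the lower bound $\sqrt{a_{pn}(m_g)}\lesssim s_n(I_g)$, replacing $n$ by $[n/p]$ yields $a_{p[n/p]}(m_g)\lesssim \eta([n/p])\asymp \eta(n)$; since $p[n/p]\leqs n$, monotonicity of $(a_n(m_g))_n$ delivers $a_n(m_g)\lesssim \eta(n)$. Dually, from $s_n(I_g)\lesssim \sqrt{a_{[n/p]}(m_g)}$, replacing $n$ by $pn$ gives $\eta(pn)\lesssim a_{[pn/p]}(m_g)=a_n(m_g)$, hence $a_n(m_g)\gtrsim \eta(n)$.

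There is essentially no obstacle: the corollary is a mechanical unpacking of Theorem~\ref{Bergman} once the doubling hypothesis is noted to be stable under any fixed linear rescaling of the index. The only bookkeeping point deserving care is distinguishing the cases $p[n/p]\leqs n$ versus $pn\geqs n$ so that the monotonicity of $a_n(m_g)$ is applied in the correct direction in each of the two inequalities.
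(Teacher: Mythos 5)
Your proof is correct and is exactly the argument the paper has in mind: the paper states the corollary as an immediate consequence of Theorem \ref{Bergman} and omits the details, which are precisely the index-rescaling of the doubling condition and the monotonicity of $(a_n(m_g))_n$ that you supply.
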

		
		The situation in the Hardy space is more subtle. Indeed, the boundedness of $I_g$ on $H^2$ is equivalent to the membership of the symbol $g$ to $ BMOA$, that is,
		\[
		\| g\|_{BMOA} \colonequals |g(0)| + \sup \{\| g\circ\varphi _w\|_{H^2}:\ w\in \mathbb{D}\}<\infty,
		\]
		where $\varphi _w(z) {\colonequals} \frac{w-z}{1-\bar{w}z}$ is the Möbius automorphism  of the unit  disc. Recall also that $I_g$ is compact on $H^2$ if and only if $g\in VMOA$, that is,
		\[
		\displaystyle \lim_{|w| \to 1^-}\| g\circ\varphi _w\|_{H^2}=0.
		\]
		The membership of $g$ to $BMOA$ or to $VMOA$ can be expressed using Carleson windows and the measure $d\nu_g(z) \colonequals |g'(z)|^2 (1-|z|^2)dm(z)$. Namely, it is proved in \cite{AS1} that
		\[
		g \in BMOA\quad \iff \frac{\nu_g(W(I_{n,k}))}{ |I_{n,k} |}=O(1)
		\]
		and 
		\[
		g \in VMOA \quad  \iff \displaystyle \lim_{n\to \infty} \sup _{1\leqs k \leqs 2^n}\frac{\nu_g(W(I_{n,k}))}{| I_{n,k}|}=0.
		\]
		
		\noindent	For $g \in VMOA$, denote by 
		{\[
			(\tau _n(\nu_g))_{n\geqs 1}\colonequals\left (\frac{\nu_g(W(I_{n}))}{{| I_n| }}  \right )_{n\geqs 1}
			\]
			the nonincreasing rearrangement of  the sequence $\left (\frac{\nu_g(W(I_{n,k}))}{{| I_{n,k}| }}  \right )_{n,k}$.}
		\\
		
		The main result in the case of the Hardy space is the following theorem.	
		
		
		\begin{theo}\label{Hardy}
			Let $g\in VMOA$. Then
			
			\begin{enumerate}[label=$(\roman*)$,leftmargin=* ,parsep=0cm,itemsep=0cm,topsep=0cm]
				\item\label{i} There exists an absolute integer $p\geqs 1$ such that 
				\[
				s_{p n}(I_g) \lesssim  \sqrt{\tau_n (\nu_g)}, \quad  n\geqs 1.
				\]
				\item \label{ii} There exists  an absolute constant $B>0$ such that
				\[
				\frac{1}{B} \displaystyle \sum _{j=1}^n 
				{\tau_j(\nu_g) } \leqs  \displaystyle \sum _{j=1}^n s_j^2(I_g),\quad n\geqs 1.
				\]
			\end{enumerate}
		\end{theo}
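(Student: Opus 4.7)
The starting point is the Littlewood-Paley identity
\[
\|I_g f\|_{H^2}^2 \asymp \int_{\mathbb{D}} |f(z)|^2\,d\nu_g(z) + |f(0)g(0)|^2 \qquad (f\in H^2),
\]
which shows, modulo a rank-one correction, that $I_g^*I_g$ is unitarily comparable to the positive Toeplitz-type operator $T$ on $H^2$ defined by the form $\langle Tf,f\rangle = \int |f|^2\,d\nu_g$. Hence $s_n(I_g)^2 \asymp s_n(T)$, and both assertions reduce to spectral estimates for the Carleson embedding $T$. My plan is to attack each part independently, using in both cases the top-$n$ dyadic windows $W(I_1),\dots,W(I_n)$ indexed so that $(\tau_j(\nu_g))_j$ is nonincreasing.

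For part \ref{ii} I would apply Ky Fan's inequality directly. Let $z_j$ denote the top of $W(I_j)$, that is the point on the same radius as $I_j$ with $1-|z_j|\asymp|I_j|$, and consider the subspace
\[
V_n \colonequals \Span\{\kappa_{z_j}:1\leqs j\leqs n\},\qquad \kappa_z(w)\colonequals\frac{(1-|z|^2)^{1/2}}{1-\bar z w}.
\]
With $P_n$ the orthogonal projection onto $V_n$ (of rank $\leqs n$), the variational characterization of singular values yields
\[
\sum_{j=1}^n s_j^2(I_g) \gtrsim \mathrm{tr}(P_nT) = \int_{\mathbb{D}} K_{V_n}(z,z)\,d\nu_g(z),
\]
where $K_{V_n}$ is the reproducing kernel of $V_n$. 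On each dyadic cell $R(J)$ the elementary estimate $|\kappa_{z_j}(z)|^2\asymp 1/|I_j|$, valid for $z\in W(I_j)$, yields $K_{V_n}(z,z)\gtrsim 1/|I_{j^\ast(J)}|$, where $j^\ast(J)$ is the index of smallest $|I_j|$ among the top-$n$ with $I_j\supseteq J$ (when such an index exists). Rearranging by Fubini and using the geometric-series bound $\sum_{j\leqs n,\,I_j\supseteq J}1/|I_j|\leqs 2/|I_{j^\ast(J)}|$ along dyadic chains of ancestors gives
\[
\sum_{j=1}^n\tau_j(\nu_g) = \sum_J \nu_g(R(J))\sum_{\substack{j\leqs n\\I_j\supseteq J}}\frac{1}{|I_j|} \leqs 2\sum_J \frac{\nu_g(R(J))}{|I_{j^\ast(J)}|} \lesssim \int K_{V_n}(z,z)\,d\nu_g(z),
\]
which closes (ii).

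For part \ref{i} I would split $\nu_g = \nu' + \nu''$, with $\nu' \colonequals \chi_{\Omega_n}\nu_g$, $\Omega_n \colonequals \bigcup_{j\leqs n} W(I_j)$, and $\nu''$ the complementary restriction. Since the $n$ heaviest Carleson windows have been absorbed into $\nu'$, every dyadic Carleson window satisfies $\nu''(W(I_{n,k}))/|I_{n,k}|\leqs\tau_{n+1}(\nu_g)$, and a standard covering argument extends this uniform bound to arbitrary arcs; Carleson's embedding theorem thus yields $\|T_{\nu''}\|\lesssim\tau_n(\nu_g)$. The bulk of the work then consists of approximating $T_{\nu'}$ in operator norm, up to $\tau_n(\nu_g)$, by a finite-rank operator of rank $\leqs(p-1)n$. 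For this I would peel off at each heavy window $W(I_j)$ a uniformly bounded number of dominant modes, namely the first few derivatives of the reproducing kernel $k_{z_j}$ at the top $z_j$, and iterate along the dyadic tree: inside $W(I_j)$, those sub-windows that are not themselves among the top-$n$ carry Carleson constants $\leqs\tau_n(\nu_g)$, so after removing the modes that capture the portion of $\nu'$ detected near $z_j$, the residual contribution on the orthogonal complement is controlled by $\tau_n(\nu_g)$. The Ky Fan inequality $s_{pn}(T)\leqs s_{(p-1)n+1}(T_{\nu'})+s_1(T_{\nu''})\lesssim\tau_n(\nu_g)$ then gives (i).

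The principal obstacle is this rank-$(p-1)n$ approximation of $T_{\nu'}$. Unlike the Bergman case, where $\nu_g$ splits naturally across the disjoint cells $R(I_{n,k})$, the Hardy Carleson windows are nested, and a single heavy $W(I_j)$ can still carry significant $\nu_g$-mass at every deeper scale, so a naive rank-one replacement per window is insufficient. The redeeming fact is that both outside $\Omega_n$ and on the deeper descendants of each $W(I_j)$ that do not themselves make the top-$n$ list, the Carleson constant has dropped to $\tau_n(\nu_g)$; turning this observation into a quantitative modal reduction with a universally bounded budget per heavy window is the technical heart of the argument and is exactly what forces the absolute multiplicative loss $p$ in the statement.
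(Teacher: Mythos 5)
Your part \ref{ii} is correct and takes a genuinely different route from the paper. The paper proves a frame version of the Ky Fan--Horn trace inequality (Lemma \ref{0}): for Bessel systems $(u_j)$, $(v_j)$ one has $\sum_{j\leqs n}|\langle Tu_j,v_j\rangle|\lesssim\sum_{j\leqs n}s_j(T)$, and applies it with the kernel-derivative test functions $u_j(z)=(1-|\xi_j|^2)^{3/2}(1-\bar\xi_jz)^{-2}$, $\xi_j$ the centre of $R(I_j)$, for which $\langle S_{\nu_g}u_j,u_j\rangle\gtrsim\nu_g(W(I_j))/|I_j|$ holds term by term. You instead use the projection form of Ky Fan's maximum principle on the span of $n$ normalized Szeg\H{o} kernels, compute the trace as $\int K_{V_n}(z,z)\,d\nu_g(z)$, and recover $\sum_{j\leqs n}\tau_j(\nu_g)$ via the Fubini--geometric-series rearrangement along dyadic chains. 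Both arguments are sound; yours avoids verifying a Bessel bound for an infinite family but needs the chain summation, while the paper's gives the diagonal lower bound window by window. (Minor point: the correction term $|f(0)g(0)|^2$ in your Littlewood--Paley identity should not be there, since $I_gf(0)=0$; this is harmless.)

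Part \ref{i}, however, has a genuine gap, which you yourself flag: the rank-$(p-1)n$ approximation of $T_{\nu'}$ to accuracy $\tau_n(\nu_g)$ is asserted rather than proved, and the proposed mechanism --- a \emph{uniformly bounded} number of ``modes'' per heavy window --- does not work as described. Removing $M$ Taylor modes at the top of a cell $R(I_j)$ leaves a residual of operator norm about $2^{-cM}\,\nu_g(R(I_j))/|I_j|$, which can be as large as $2^{-cM}\tau_j(\nu_g)$; driving this below $\tau_n(\nu_g)$ costs roughly $\log\bigl(\tau_j(\nu_g)/\tau_n(\nu_g)\bigr)$ modes at window $j$, and $\sum_{j\leqs n}\log(\tau_j/\tau_n)$ is not $O(n)$ in general (take $\tau_j\asymp e^{-j}$). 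The paper sidesteps this entirely by discretizing first: a sampling sequence for $\cA^2_1$ with $N$ points in each (pairwise disjoint) cell $R(I_k)$, together with subharmonicity of $|g'|^2$, dominates $S_{\nu_g}$ by a finite sum of Toeplitz operators with purely atomic symbols $\sum_{k}\nu_g(R(I_k))\delta_{z_k}$. For an atomic measure, deleting the $n-1$ atoms indexed by the largest window ratios is \emph{exactly} a rank-$(n-1)$ perturbation, with no approximation error; the crux is then the combinatorial Lemma \ref{KL}, which shows by a stopping-time contradiction on the dyadic tree that the remaining tail $\sum_{k\geqs n}\nu_g(R(I_k))\delta_{z_k}$ has Carleson constant at most $\tau_n(\nu_g)$. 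That discretization step and the tail estimate of Lemma \ref{KL} are the missing ideas needed to close your part \ref{i}.
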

		
		Under some regularity conditions, we obtain a description of the asymptotic behavior of the singular values of $I_g$.
		\\ \\
		\noindent Let $\gamma >0$, a positive nonincreasing sequence $(u_n)_n$ is said to belong to $\cR_\gamma$ if there exists a nonincreasing sequence $(x_n)_n$ such that $n^\gamma x_n$ increases to infinity and $u_n \asymp x_n$. If, in addition, there exists $\alpha \in (0,\gamma)$ such that $n^\alpha x_n$ decreases, $(u_n)_n$ is said to belong to $\cR _{\gamma, \alpha}$.
		
		\begin{theo}\label{B}
			Let $g\in VMOA$. The following statements hold. \\
			\begin{enumerate}[label=$(\roman*)$,leftmargin=* ,parsep=0cm,itemsep=0cm,topsep=0cm]
				\item \label{2i}  Let $\gamma \in (0,1/2)$. If  {$(s_n(I_g))_n \in \cR_\gamma$ or $(\tau _n(\nu_g))_n \in \cR_{2\gamma}$}. Then
				\[
				s_n(I_g) \asymp\sqrt{ \tau _n(\nu_g)}.
				\]
				\item\label{3i} Let $\gamma \in (0,1)$ and let $\alpha \in (0,\gamma)$. If  {$(s_n(I_g))_n \in \cR_{\gamma,\alpha}$ or $(\tau _n(\nu_g))_n \in \cR_{2\gamma,2\alpha}$}. Then
				\[
				s_n(I_g) \asymp\sqrt{ \tau _n(\nu_g)}.
				\]
			\end{enumerate}
		\end{theo}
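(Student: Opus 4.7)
The plan is to promote the one-sided estimates of Theorem \ref{Hardy} to the pointwise equivalence $s_n(I_g)^2 \asymp \tau_n(\nu_g)$ by using the regularity to absorb the index shift by the fixed integer $p$ from Theorem \ref{Hardy}. A basic observation used throughout: any nonincreasing sequence $(u_n) \in \cR_\gamma$ (a fortiori in $\cR_{\gamma,\alpha}$) satisfies the weak doubling $u_{pn} \asymp u_n$, since the reference sequence $(x_n)$ with $n^\gamma x_n$ nondecreasing gives $x_{pn} \geqs p^{-\gamma} x_n$ while $x_{pn} \leqs x_n$ by monotonicity, and $u_n \asymp x_n$.

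The upper bound $s_n^2(I_g) \lesssim \tau_n(\nu_g)$ is immediate from Theorem \ref{Hardy}\ref{i}. If $(s_n(I_g)) \in \cR_\gamma$, then $s_n^2 \asymp s_{pn}^2 \lesssim \tau_n$; if $(\tau_n(\nu_g)) \in \cR_{2\gamma}$, then by monotonicity of $(s_n)$ we have $s_n^2 \leqs s_{p[n/p]}^2 \lesssim \tau_{[n/p]} \asymp \tau_n$. The same reasoning applies under the stronger hypothesis $\cR_{\gamma,\alpha}$ in both cases.

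For the lower bound $\tau_n \lesssim s_n^2$, I combine Theorem \ref{Hardy}\ref{ii} with its reverse obtained by summing Theorem \ref{Hardy}\ref{i} termwise, namely $\sum_{j=1}^{pn} s_j^2 \lesssim \sum_{j=1}^n \tau_j$. Together these give the Cesàro equivalence $\sum_{j=1}^{pn} s_j^2 \asymp \sum_{j=1}^n \tau_j$. Under $(s_n) \in \cR_\gamma$ with $\gamma < 1/2$, the regularity forces $\sum_{j=1}^n s_j^2 \asymp n s_n^2$ (since $\sum j^{-2\gamma} \asymp n^{1-2\gamma}$), and combined with the trivial $\sum_{j=1}^n \tau_j \geqs n \tau_n$ and Theorem \ref{Hardy}\ref{ii} this yields $\tau_n \lesssim s_n^2$. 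Under $(\tau_n) \in \cR_{2\gamma}$ the symmetric estimate $\sum_{j=1}^n \tau_j \asymp n \tau_n$ holds; applying the Cesàro equivalence at matched scales $pn$ and $Mpn$ and subtracting, one isolates $\sum_{j=pn+1}^{Mpn} s_j^2 \gtrsim n \tau_n$ for $M$ large enough depending on $\gamma$ (using the doubling $\tau_{Mn} \asymp \tau_n$ and the factor $M^{1-2\gamma}\to\infty$), and the monotonicity of $(s_j)$ then yields $s_{pn+1}^2 \gtrsim \tau_n/(Mp)$, hence $s_n^2 \gtrsim \tau_n$. This settles part \ref{2i}.

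For part \ref{3i}, where $\gamma$ may reach $1$, the identity $\sum_{j=1}^n \tau_j \asymp n\tau_n$ can fail (the sum may remain bounded as $n\to\infty$) and the subtraction above loses its grip. The additional assumption $n^{2\alpha}\tau_n$ nonincreasing supplies a strict geometric decay $\tau_{2n} \leqs 2^{-2\alpha}\tau_n$, so that the dyadic blocks $\sum_{j=2^k+1}^{2^{k+1}} \tau_j \asymp 2^k \tau_{2^k}$ form a geometric-type series whose partial sums are comparable to their last block. One then re-runs the matched-scale argument after dyadic grouping, with the geometric gain $2^{-2\alpha}$ replacing the now marginal $M^{1-2\gamma}$, to recover $s_n^2 \gtrsim \tau_n$ for any $\gamma < 1$; the argument under $(s_n) \in \cR_{\gamma,\alpha}$ is parallel. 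The main obstacle is precisely this dyadic bookkeeping: one must check that the constants from Theorem \ref{Hardy} combine into a genuine positive gain, which forces the dyadic scale to depend jointly on $\gamma$, $\alpha$, and these absolute constants. Both parameters are essential -- $\gamma$ prevents $\tau$ from decaying too slowly and ensures doubling, while $\alpha$ forces the strict geometric decay that makes dyadic blocks termwise comparable -- whereas the upper bound is insensitive to the regime.
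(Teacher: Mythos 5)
Your part \ref{2i} is essentially the paper's own argument and is correct: the upper bound follows from Theorem \ref{Hardy}\ref{i} plus the doubling $u_{pn}\asymp u_n$, and the lower bound comes from comparing the partial sums $\sum_{j\le n}\tau_j(\nu_g)$ and $\sum_{j\le n}s_j^2(I_g)$ at two scales $n$ and $\kappa n$ and exploiting that $\kappa^{1-2\gamma}\to\infty$ when $\gamma<1/2$ (the paper absorbs where you subtract, which is cosmetic).

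Part \ref{3i} has a genuine gap. Once $\gamma$ may exceed $1/2$, the sequence $\tau_n(\nu_g)$ may decay faster than $1/n$, so that $\sum_j\tau_j(\nu_g)$ converges; in that regime the only two global inequalities you are working with, namely $\frac{1}{B}\sum_{j\le n}\tau_j\le\sum_{j\le n}s_j^2$ and $\sum_{j\le pn}s_j^2\lesssim\sum_{j\le n}\tau_j$, are dominated by their first $O(1)$ terms and carry no information about the tails. Concretely, take $\tau_n=n^{-3/2}$, which lies in $\cR_{2\gamma,2\alpha}$ for $\gamma=0.8$, $\alpha=0.7$: any nonincreasing sequence $\sigma_n$ that equals $\tau_n$ for $n\le N_0$ and then drops arbitrarily fast still satisfies $\sum_{j\le n}\sigma_j\ge(1-\delta_{N_0})\sum_{j\le n}\tau_j$ for all $n$ together with $\sigma_{pn}\le\tau_n$ and monotonicity, yet $\sigma_n/\tau_n\to0$. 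So the pointwise lower bound simply does not follow from Theorem \ref{Hardy} plus regularity, and no dyadic regrouping or matched-scale subtraction can fix this: the ``geometric gain $2^{-2\alpha}$'' never touches the real obstruction, which is that the constants $B$ and the implied constant in \ref{i} multiply the head of a convergent series. This is precisely why the paper abandons the Ces\`aro route for part \ref{3i} and instead establishes the much stronger family of trace inequalities $\sum_j h\left(\frac{1}{B}\sqrt{\tau_j(\nu_g)}\right)\le\sum_j h(s_j(I_g))$ for every increasing convex $h$ with $h(0)=0$ and $h(t^{\varepsilon})$ concave (Lemma \ref{0}, Lemma \ref{lemLLQP}, Theorem \ref{trace}, Corollary \ref{Ctrace}); combined with the pointwise upper bound from \ref{i} this yields a two-sided family of $h$-inequalities, and Lemma \ref{lemEE} converts that family into $s_n(I_g)\asymp\sqrt{\tau_n(\nu_g)}$ under the $\cR_{\gamma,\alpha}$ hypothesis. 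Some substitute for this richer set of test inequalities is indispensable; your proposal does not supply one.
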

		\noindent The following corollary is a direct consequence of Theorem \ref{B}.
		\begin{cor}  Let $g \in VMOA$. Let $\alpha \in [0,1[$ and  $\beta \geqs 0$ such that $ \alpha >0$ or $\beta >0$ . We have
			\[
			s_n(I_g) \asymp \frac{1}{n^\alpha \log^\beta (n+1)} \quad \iff \tau_n(\nu_g) \asymp \frac{1}{n^{2\alpha} \log^{2\beta} (n+1)}.
			\]
		\end{cor}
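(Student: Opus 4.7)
The plan is to view the corollary as a direct bookkeeping application of Theorem~\ref{B}. Both sides of the claimed equivalence assert that one of $\bigl(s_n(I_g)\bigr)_n$ or $\bigl(\tau_n(\nu_g)\bigr)_n$ is asymptotic to the explicit model $x_n \colonequals 1/\bigl(n^\alpha \log^\beta(n+1)\bigr)$ (respectively to $x_n^2$). Once I verify that this model sequence lies in the appropriate regularity class $\cR_\gamma$ or $\cR_{\gamma,\alpha'}$, Theorem~\ref{B} delivers $s_n(I_g) \asymp \sqrt{\tau_n(\nu_g)}$, and combining this identity with the hypothesized asymptotics on whichever side is assumed produces the desired two-sided estimate. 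The ``or'' in the hypotheses of Theorem~\ref{B} is what makes the argument symmetric: the regularity test may be applied to whichever of the two sequences is prescribed to follow the model.

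I would split the verification according to the size of $\alpha$, using $x_n$ itself (which is nonincreasing whenever $(\alpha,\beta) \neq (0,0)$) as the witness in the definitions. When $\alpha \in [0,1/2)$, fix any $\gamma \in (\alpha, 1/2)$; then
\[
n^\gamma x_n = \frac{n^{\gamma-\alpha}}{\log^\beta(n+1)}
\]
is eventually increasing and diverges, so $x_n \in \cR_\gamma$ and $x_n^2 \in \cR_{2\gamma}$ with $2\gamma \in (0,1)$, placing us in the scope of Theorem~\ref{B}\ref{2i}. When $\alpha \in [1/2,1)$, pick $\gamma \in (\alpha, 1)$ together with any $\alpha' \in (0,\alpha)$; then $\alpha' < \gamma$, the quantity $n^{\alpha'} x_n = n^{\alpha'-\alpha}/\log^\beta(n+1)$ is decreasing, while $n^\gamma x_n$ still tends monotonically to infinity, so $x_n \in \cR_{\gamma,\alpha'}$ and $x_n^2 \in \cR_{2\gamma,2\alpha'}$, which is precisely the setting of Theorem~\ref{B}\ref{3i}. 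Together, these two cases exhaust all admissible $(\alpha,\beta)$.

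I do not foresee any genuine obstacle; the only point requiring slight care is the ``increases to infinity'' clause in the definitions of $\cR_\gamma$ and $\cR_{\gamma,\alpha'}$, since $n^\gamma x_n$ above is only \emph{eventually} monotone. This is harmless because the definition asks only for a nonincreasing $x_n$ asymptotic to $u_n$, so one may redefine finitely many initial terms of $x_n$ to obtain strict global monotonicity of $n^\gamma x_n$ without altering the asymptotics. The proof thus reduces to the case analysis above followed by a single invocation of Theorem~\ref{B} to transfer the asymptotics via $s_n(I_g)^2 \asymp \tau_n(\nu_g)$.
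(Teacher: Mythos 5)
Your proposal is correct and is exactly the argument the paper intends: the paper states the corollary as a "direct consequence of Theorem \ref{B}" without proof, and your case split ($\alpha\in[0,1/2)$ via assertion \ref{2i}, $\alpha\in[1/2,1)$ via assertion \ref{3i}), together with the finite-initial-segment adjustment to make $n^\gamma x_n$ genuinely monotone, supplies precisely the missing verification.
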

		
		
		
		The paper is organized as follows. In Section \ref{sec2}, we prove Theorem \ref{Bergman}. Section \ref{sec3} is devoted to the proofs of the main  results in the Hardy space.
		\section{Bergman spaces}
		\label{sec2}
		We start this section by recalling the definitions of sampling and interpolating sets for Bergman spaces which play an important role in this paper.\\
		
		Let $Z = (z_n)_{n\geqs 1}$ be a sequence of points in $\D$.\\
		\begin{enumerate}[label=$\bullet$]
			\item  $Z$  is said to be separated if $\displaystyle \inf_{i\neq j}  \rho(z_i,z_j) > 0$, where $\rho(z,w) \colonequals   \left|\frac{z-w}{1-\bar{z}w}\right|$
			is the pseudohyperbolic metric on $\D$.
			
			\item  $Z$ is called a sampling set for $\cA^2_\alpha$ if 
			\[
			\|f\|^2_{\cA^2_\alpha} \asymp \sum_{n\geqs 1} |f(z_n)|^2(1-|z_n|^2)^{2+\alpha},\quad f \in \cA^2_\alpha.
			\]
			\item  $Z$  is called an interpolating set for $\cA^2_\alpha$ if for every sequence $(\lambda_n)_{n \geqs 1}$ satisfying 
			\[
			\displaystyle \sum_{n\geqs 1} |\lambda_n|^2(1-|z_n|^2)^{2+\alpha}<\infty,
			\]
			there exists $f$ in $\cA^2_\alpha$ such that $f(z_n)=\lambda_n$ for all $n\geqs 1.$ 
		\end{enumerate}
		The closed graph theorem shows that if $Z$ is an
		interpolating set for $\cA^2_\alpha$, then there exists a constant $C$ such that the interpolation problem can be
		solved by a function satisfying
		\[
		\|f\|_{\cA^2_\alpha} \leqs C \left(\sum_{n\geqs 1} |\lambda_n|^2(1-|z_n|^2)^{2+\alpha}\right)^{1/2}.
		\]
		The smallest such constant $C$  is called the constant of interpolation and we denoted it by $M_\alpha(Z)$. We draw attention to the fact that every interpolating set for $\cA^2_\alpha$ is separated. Interpolating and sampling sets  for Bergman spaces have been studied by many authors, we refer the reader to \cite{Seip1,Seip2} for more details.\\
		
		Let $\mu$ be a positive Borel measure on $\D$. Define the operator $T_\mu$ by
		\[ 
		T_\mu f(z)=\int_\mathbb{D} f(w)K^\alpha(z,w)(1-|w|^2)^\alpha d\mu(w),\  f\in \cA^2_\alpha,
		\]
		where $K^\alpha(z,w)=(1-z\bar{w})^{-2-\alpha}$ is the reproducing kernel of $\cA^2_\alpha.$ A simple computation yields
		\[
		\langle T_\mu f , f\rangle_{\cA^2_\alpha} = \displaystyle \int _\mathbb{D}|f(z)|^2 (1-|z|^2)^\alpha d\mu (z).
		\]
		
		\noindent These operators are called Toeplitz operators on Bergman spaces. Their boundedness was considered by Hasting \cite{Has}. Later, Luecking obtained necessary and sufficient condition on $\mu$ for which  $T_\mu$ belongs to a Schatten class $\mathcal{S}_p(\cA^2_\alpha)$, with $p>0$ (see \cite{Lue1}). Recently, the asymptotic behavior of the singular values of such operators has been studied on a large class of weighted Bergman spaces (see \cite{APP, EE}).\\ \\
		In the specific case of discrete measures, one can obtain explicit lower and upper bounds for the singular values of the associated Toeplitz operators. We have the following two lemmas.
		\begin{lem}\label{separation}
			Let $Z=(z_n)_{n\geqs 1}$ be a separated sequence and let $(c_n)_{n\geqs 1}$ be a sequence of positive numbers such that  $\left(c_n(1-|z_n|^2)^{-2}\right)_{n\geqs 1}$  decreases to zero. For $\mu= \displaystyle \sum_{n\geqs 1} c_n\delta_{z_n}$, the operator $T_\mu$ is compact on $\cA^2_\alpha$ and 
			\[
			s_n(T_\mu) \lesssim \dfrac{c_n}{(1-|z_n|^2)^2}.
			\]
		\end{lem}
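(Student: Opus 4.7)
The plan is to represent $T_\mu$ as a weighted sum of rank-one projections onto reproducing kernels and use the standard min-max characterization of singular values. Writing $k_w^\alpha(z) \colonequals K^\alpha(z,w)$ and recalling that $f(w) = \langle f, k_w^\alpha\rangle_{\cA^2_\alpha}$, a direct computation from the definition gives
\[
T_\mu f = \sum_{k\geqs 1} c_k\,(1-|z_k|^2)^{\alpha}\,\langle f, k_{z_k}^\alpha\rangle_{\cA^2_\alpha}\, k_{z_k}^\alpha,
\]
so that $T_\mu = \sum_k \lambda_k\, k_{z_k}^\alpha \otimes k_{z_k}^\alpha$ with $\lambda_k \colonequals c_k(1-|z_k|^2)^{\alpha}$.

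Next I would truncate: set $T^{(n)} \colonequals \sum_{k<n} \lambda_k\, k_{z_k}^\alpha \otimes k_{z_k}^\alpha$ and $R_n \colonequals T_\mu - T^{(n)} = \sum_{k\geqs n} \lambda_k\, k_{z_k}^\alpha \otimes k_{z_k}^\alpha$. Since $T^{(n)}$ has rank at most $n-1$, the elementary bound
\[
s_n(T_\mu) \leqs \|T_\mu - T^{(n)}\| = \|R_n\|
\]
reduces the problem to estimating the norm of the tail operator $R_n$.

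The core of the argument is then to bound $\|R_n\|$ in terms of $\sup_{k\geqs n} c_k/(1-|z_k|^2)^2$. By the characterization of the Toeplitz operator norm as $\|R_n\| = \sup_{\|f\|_\alpha = 1}\int_\D |f|^2(1-|z|^2)^\alpha\, d\mu_n$, where $\mu_n = \sum_{k\geqs n} c_k \delta_{z_k}$, this amounts to showing that $\mu_n$ is a Carleson measure for $\cA^2_\alpha$ with Carleson constant bounded by $\sup_{k\geqs n} c_k/(1-|z_k|^2)^2$. For a separated sequence this is classical: each Carleson window of side $\asymp 1-|w|$ contains only boundedly many points $z_k$, each contributing at most $c_k \asymp c_k/(1-|z_k|^2)^2 \cdot (1-|z_k|^2)^2$, and summing over a Whitney-type partition yields the geometric Carleson condition with the stated constant (equivalently, one may invoke Luecking's criterion for discrete Carleson measures on separated sequences).

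Using that $c_k/(1-|z_k|^2)^2$ is nonincreasing, the sup at level $n$ is exactly $c_n/(1-|z_n|^2)^2$, which combined with the two previous steps gives $s_n(T_\mu) \lesssim c_n/(1-|z_n|^2)^2$. Compactness follows from the same decomposition since $\|R_n\| \to 0$ as $n \to \infty$ by the assumption $c_n/(1-|z_n|^2)^2 \to 0$, so each $R_n$ is a norm limit of finite-rank operators. The only delicate step is the Carleson bound for $\mu_n$; the rest is routine. I expect the separation hypothesis to be used precisely there, to pass from a pointwise size condition on the weights $c_k/(1-|z_k|^2)^2$ to a geometric Carleson-window estimate.
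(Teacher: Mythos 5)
Your proposal is correct and follows essentially the same route as the paper: truncate $T_\mu$ to the first $n-1$ point masses (a rank-$(n-1)$ operator), bound $s_n(T_\mu)$ by the norm of the positive tail operator via its quadratic form, pull out the factor $c_n/(1-|z_n|^2)^{2}$ using the monotonicity hypothesis, and reduce to the fact that a separated sequence satisfies $\sum_{k} |f(z_k)|^2(1-|z_k|^2)^{2+\alpha}\lesssim \|f\|_\alpha^2$. The only cosmetic difference is that the paper proves this last inequality directly by subharmonicity on the pairwise disjoint discs $\cD_\delta(z_j)$ rather than invoking the window-counting/Luecking criterion you cite.
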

		
		\begin{proof}
			First, recall \cite{GGK} that  for a compact operator $T$ on a Hilbert space $H$,
			\[
			s_n(T)  =  \inf\left\{\|T-R\| : R \colon H\to H\ \text{is of rank}\ <n \right\}.
			\]
			For  $\mu_n = \displaystyle \sum_{j=1}^{n-1} c_j\delta_{z_j}$, the associated operator $T_{\mu_n}$ is of rank $n-1$. Thus, it suffices to show that 
			$$\|T_\mu-T_{\mu_n}\| \lesssim \dfrac{c_n}{(1-|z_n|^2)^2}.$$
			Let $f \in \mathcal{A}^2_\alpha$. We have
			
			\begin{eqnarray*}
				\langle \left(T_\mu-T_{\mu_n}\right)f,f\rangle_{\cA^2_\alpha} & = & \displaystyle \sum_{j\geqs n} c_j (1-|z_j|^2)^\alpha|f(z_j)|^2\\ 
				& \leqs &  \frac{c_n}{(1-|z_n|^2)^2} \displaystyle \sum_{j\geqs n} (1-|z_j|^2)^{2+\alpha} |f(z_j)|^2.
			\end{eqnarray*}
			Moreover, since the sequence $Z$ is separated, there exists a small constant $\delta>0$ such that the discs $\cD_\delta(z_j)=\{z\in \D : |z-z_j|<\delta(1-|z_j|)\}$ are pairwise disjoint. 
			By subharmonicity, we get
			\[
			|f(z_j)|^2(1-|z_j|^2)^{2+\alpha} \lesssim \int_{\cD_\delta(z_j)} |f(z)|^2(1-|z|^2)^\alpha\ dm(z).
			\]
			Therefore
			\begin{eqnarray*}
				\langle \left(T_\mu-T_{\mu_n}\right)f,f\rangle_{\cA^2_\alpha} & \lesssim & \frac{c_n}{(1-|z_n|^2)^2} \sum_{j\geqs n} \displaystyle\int_{\cD_\delta(z_j)} |f(z)|^2(1-|z|^2)^\alpha\ dm(z)\\
				& \lesssim & \frac{c_n}{(1-|z_n|^2)^2}\int_{\D} |f(z)|^2(1-|z|^2)^\alpha\ dm(z)\\
				& = &  \frac{c_n}{(1-|z_n|^2)^2}\|f\| ^2_\alpha.
			\end{eqnarray*}
			Since the operator $T_\mu-T_{\mu_n}$ is positive, we obtain 
			\[
			\| T_\mu -T_{\mu _n}\| \lesssim  \frac{c_n}{(1-|z_n|^2)^2}.
			\]
		\end{proof}
		
		\begin{lem}\label{interpolation}
			Let $Z=(z_n)_{n\geqs 1}$ be an interpolating set for $\cA^2_\alpha$ and let $(c_n)_{n\geqs 1}$ be a sequence of positive numbers such that  $\left(c_n(1-|z_n|^2)^{-2}\right)_{n\geqs 1}$  decreases to zero. For $\mu= \displaystyle \sum_{n\geqs 1} c_n\delta_{z_n}$, the operator $T_\mu$ is compact on $\cA^2_\alpha$ and  
			\[
			s_n(T_\mu) \asymp \frac{c_n}{(1-|z_n|^2)^2}.
			\]
		\end{lem}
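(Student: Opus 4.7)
The plan is to combine Lemma \ref{separation} with a min–max argument. Since every interpolating set for $\cA^2_\alpha$ is separated, Lemma \ref{separation} already gives both compactness of $T_\mu$ and the upper estimate $s_n(T_\mu) \lesssim c_n/(1-|z_n|^2)^2$. It remains to establish the matching lower bound.

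For the lower bound I would invoke the Courant--Fischer characterization of singular values of the positive compact operator $T_\mu$:
\[
s_n(T_\mu) \;=\; \sup_{\dim V = n}\;\inf_{f\in V\setminus\{0\}} \frac{\langle T_\mu f,f\rangle_{\cA^2_\alpha}}{\|f\|_\alpha^2},
\]
and exhibit an $n$-dimensional subspace $V_n\subset \cA^2_\alpha$ on which the Rayleigh quotient is bounded below by $c_n/(M_\alpha(Z)^2(1-|z_n|^2)^2)$.

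To build $V_n$, I would use the interpolation hypothesis to produce, for each $\lambda=(\lambda_1,\ldots,\lambda_n)\in\C^n$, a function $f_\lambda \in \cA^2_\alpha$ with
\[
f_\lambda(z_j) = \lambda_j (1-|z_j|^2)^{-(2+\alpha)/2}\quad (1\leqs j\leqs n),\qquad f_\lambda(z_j)=0\quad (j>n),
\]
and $\|f_\lambda\|_\alpha \leqs M_\alpha(Z)\,\|\lambda\|_{\ell^2}$. Selecting $f_\lambda$ as the minimal-norm interpolant (an orthogonal projection in the appropriate Hilbert space setting) makes $\lambda \mapsto f_\lambda$ linear; since the $\lambda_j$ are recovered from the values $f_\lambda(z_j)$, the map is also injective, so $V_n\colonequals \{f_\lambda:\lambda\in\C^n\}$ is genuinely $n$-dimensional. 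Then
\[
\langle T_\mu f_\lambda,f_\lambda\rangle_{\cA^2_\alpha} = \sum_{j=1}^n c_j(1-|z_j|^2)^\alpha |f_\lambda(z_j)|^2 = \sum_{j=1}^n \frac{c_j}{(1-|z_j|^2)^{2}}\,|\lambda_j|^2 \geqs \frac{c_n}{(1-|z_n|^2)^2}\,\|\lambda\|_{\ell^2}^2,
\]
where the last inequality is the monotonicity hypothesis on $(c_j(1-|z_j|^2)^{-2})_j$. Combined with the interpolation norm bound $\|f_\lambda\|_\alpha^2 \leqs M_\alpha(Z)^2\|\lambda\|_{\ell^2}^2$, this yields the desired lower bound via the min--max formula.

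I do not foresee a serious obstacle: the argument is structurally standard once one has the interpolation constant. The only point requiring a small amount of care is ensuring that $V_n$ is a linear subspace of the right dimension, which is handled by choosing the minimal-norm interpolant so that $\lambda\mapsto f_\lambda$ is linear, and noting injectivity from the nonvanishing factor $(1-|z_j|^2)^{-(2+\alpha)/2}$.
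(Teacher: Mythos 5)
Your proposal is correct and takes essentially the same route as the paper: the upper bound and compactness via Lemma \ref{separation}, and the lower bound via the min--max principle on an $n$-dimensional test subspace whose norm is controlled by the interpolation constant $M_\alpha(Z)$. Your space of minimal-norm interpolants of data supported on $\{1,\ldots,n\}$ is in fact the same subspace the paper uses, namely the orthogonal complement in $\overline{\mathrm{span}}\{k_{z_j}^\alpha : j\geqs 1\}$ of $\mathrm{span}\{k_{z_j}^\alpha : j>n\}$, described there via the Riesz basis of normalized reproducing kernels.
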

		
		\begin{proof}
			The upper bound result follows from Lemma \ref{separation}.  To obtain the lower bound, we use an alternative characterization {\cite[p. 49]{GGK}} of the singular values:
			\[
			s_n(T_\mu) =  \underset{E,\dim E=n}{\sup}\ \left(\underset{f\in E\setminus\{0\}}{\inf}\ \frac{\|T_\mu f\|_{\cA^2_\alpha}}{\|f\|_{\cA^2_\alpha}} \right).
			\] 
			Set
			$\cK_Z = span\{k_{z_j}^\alpha : j\geqs 1\}$, where $k_{z_j}^\alpha$ denotes the normalized reproducing kernel of $\cA^2_\alpha$. We choose as subspace $E$ the orthogonal complement in $\cK_Z$ of the linear span of  $\{k_{z_j}^\alpha : j > n\}$. Since $Z$ is an interpolating set for $\cA^2_\alpha$, the family $\{k_{z_j}^\alpha : j\geqs 1\}$ is a Riesz basis of $\cK_Z$ and then the vector space $E$ is of  dimension $n$.  
			Let $f\in E $, we have
			\begin{eqnarray*}
				\|f\|_{\cA^2_\alpha}^2\ \leqs\ [M_\alpha(Z)]^{2}\sum_{j\geqs 1} |\langle f,k_{z_j}^\alpha\rangle_{\cA^2_\alpha}|^2\ =\ [M_\alpha(Z)]^{2}\sum_{j=1}^n |\langle f,k_{z_j}^\alpha\rangle_{\cA^2_\alpha} |^2.
			\end{eqnarray*}
			Furthermore
			\begin{eqnarray*}
				\langle T_\mu f,f \rangle_{\cA^2_\alpha} & = & \sum_{j\geqs 1} c_j (1-|z_j|^2)^\alpha \left| f(z_j) \right|^2 \\
				& = & \sum_{j=1}^n c_j (1-|z_j|^2)^\alpha \left| f(z_j) \right|^2 \\
				& \geqs & \frac{c_n}{(1-|z_n|^2)^2} \sum_{j=1}^n |\langle f,{k_{z_j}^\alpha}\rangle_{\cA^2_\alpha}|^2,
			\end{eqnarray*}
			which finally leads to
			\[
			s_n(T_\mu)\gtrsim \left[M_\alpha(Z)\right]^{-2} \dfrac{c_n}{(1-|z_n|^2)^2}.
			\]
		\end{proof} 
		
		\begin{proof}[\textbf{Proof of Theorem \ref{Bergman}}]
			We start with the the upper bound of the singular values of $I_g$.  Let $Z=(z_n)_{n\geqs 1}$ be a  sampling set  for $\cA^2_{\alpha+2}$ such that each $R(I_n)$ contains $N$ points of $Z$, where $N$ is a large integer (see \cite{Lue2}). We denote these points by $(z_{n,i})_{n\geqs 1}$, where $i=1,\ldots,N$. 
			Using the fact that $Z$ is a sampling set for $\cA^2_{\alpha +2}$ with the subharmoncity of $|g'|^2$, we get
			\begin{eqnarray*}
				\int_{\D}|f(z)g'(z)|^2(1-|z|^2)^{2+\alpha}dm(z)& \asymp & \sum_{n\geqs 1}|f(z_{n})g'(z_{n})|^2(1-|z _{n}|^2)^{4+\alpha}\\
				& \lesssim & \sum_n  |f(z_{n})|^2(1-|z _{n}|^2)^\alpha \int _{R(I_n)}|g'(z)|^2(1-|z|^2)^2dm(z)\\
				& = & \sum_{n,i}  |f(z_{n,i})|^2(1-|z _{n,i}|^2)^\alpha \int _{R(I_n)}|g'(z)|^2(1-|z|^2)^2dm(z)	\\
				& = & \langle \displaystyle \sum_{i=1}^N T_{\mu_i} f,f\rangle_{\cA^2_\alpha},
			\end{eqnarray*}
			where $\mu_i = \displaystyle \sum _{n\geqs 1}m_g({R(I_n)})\delta _{z_{n,i}}.$ This implies
			\[
			I_g^*I_g  \lesssim \displaystyle   \sum_{i=1}^N  T_{\mu_i}.
			\] 
			{We use a known inequality of the singular values  from \cite[p. 30]{GKG}. If $A$ and $B$ are two compact operators, then
				\begin{equation}\label{eq(2.1)}
				s_{m+n-1}(A+B)\leqs s_m(A)+s_n(B), \quad m,n=1,2,\ldots.
				\end{equation}
				Thus we obtain	
			}	 
			\begin{eqnarray*}
				s_{Nn}(I_g^*I_g)  \lesssim  \displaystyle \sum_{i=1}^N s_n(T_{\mu_i}) \leqs  N  \max_{1\leqs i\leqs N} s_n(T_{\mu_i}).
			\end{eqnarray*}
			This, together with Lemma \ref{separation}, gives
			\[
			s_{Nn}^2(I_g)\lesssim N   \max_{1\leqs i\leqs N} s_n(T_{\mu_i}) \lesssim \frac{m_g({R(I_n)})}{(1-|z_n|^2)^2}.
			\]
			Now, we turn to prove the lower bound result. Choose a point $\xi_n$ in $R(I_n)$ so that $|g'(z)| \leqs 2|g'(\xi_n)|$ for all $z \in R(I_n)$.  Then
			\begin{eqnarray}\label{2}
			m_g(R(I_n))= \int _{R(I_n)}|g'(z)|^2 (1-|z|^2)^2dm(z) \leqs |g'(\xi_n)|^2 (1-|\xi_n|^2)^4.
			\end{eqnarray}
			Note that  $(\xi_n)_{n\geqs 1}$ can be written as a finite union of separated sets. Since every separated set itself can be expressed as a finite union of interpolating sets (see \cite{Seip2}), there exist  $\Lambda_1, \Lambda_2,\cdots ,\Lambda_M$ such that $\{\xi_n, \ n\in \Lambda _i\}$ is an interpolating sequence for  $\cA^2_{\alpha}$. Therefore 
			
			
			\begin{eqnarray*}
				\int_\D |f(z)g'(z)|^2(1-|z|^2)^{\alpha+2}dm(z)
				& \gtrsim &  \sum_{i=1}^M  \sum_{n\in \Lambda _i} |f(\xi_{n})|^2|g'(\xi_{n})|^2(1-|\xi_n|^2)^{\alpha+4}\\
				& \geqs &  \sum_{i=1}^M  \sum_{n\in \Lambda _i}|f(\xi_{n})|^2(1-|\xi_{n}|^2)^\alpha m_g(R(I_n))\\
				& = & \langle \sum_{i=1}^M T_{\mu_i} f,f\rangle_{\cA^2_{\alpha}},
			\end{eqnarray*}	
			where $\mu_i=\displaystyle \sum_{n\in \Lambda _i} m_g({R(I_n)})\delta_{\xi_{n}}$.
			The first inequality follows from the fact that $(\xi_n)_{n\geqs 1}$ is a finite union of separated sets. Therefore
			\[
			I_g^*I_g\gtrsim \sum_{i=1}^M T_{\mu_i} \geqs  T_{\mu_k},\quad k\in \{1,..,M\}.
			\] 
			This implies that 
			\[
			s_n(T_{\mu_k}) \lesssim s^2_n(I_g),\quad k\in \{1,..,M\}.
			\]
			Finally, using the fact that each {set $\{\xi_n, \ n\in \Lambda _i\}$}, for $i=1,\ldots,M$, is an interpolating set for ${\cA^2_{\alpha}}$ along with Lemma \ref{interpolation}, we get
			\[
			s_n(T_{\mu_k}) \gtrsim a_n(\mu_k),
			\]
			where $a_n(\mu _k)$ is the nonincreasing rearrangement of the sequence of the sequence $(\frac{\mu _k({R(I_n)})}{m({R(I_n)})})_n$.
			Consequently, there exists an integer $p$ such that $s_{[\frac{n}{p}]}^2(I_g)\gtrsim \frac{m_g({R(I_n)})}{(1-|z_n|^2)^2}.$ 
		\end{proof}
		\section{The Hardy space $H^2$}\label {hardy}
		\label{sec3}
		This section is devoted to prove Theorems \ref{Hardy} and \ref{B}. We start by recalling the connection between integration operators acting on the Hardy space $H^2$ and some standard operators.\\\\
		Let $\mu$ be a positive Borel measure on $\mathbb{D}$, and let $J_\mu$ denote  the embedding operator from $H^2$ into $L^2(\mu)$. Carleson proved in \cite{Car} that $J_\mu$  is bounded if and only if $\mu(W(I))/|I |$ is uniformly bounded for all arcs $I\subset\mathbb{T}$. This result can be expressed in terms of the family of the dyadic arcs. 
		Indeed, one can see that $J_\mu$ is bounded if and only if there exists $C>0$ such that 
		\[
		\mu ( W(I_{n,k}) ) \leqs C | I_{n,k}|, \quad \quad  n\geqs 1\ \mbox{and}\ 1\leqs k \leqs 2^n.
		\]
		Note also that $J_\mu$ is compact if and only if 
		\begin{eqnarray}\label{compact}
		\displaystyle \lim _{n\to \infty} \displaystyle \sup _{1\leqs k\leqs 2^n} \frac{\mu ( W(I_{n,k}) )}{| I_{n,k} |}= 0.
		\end{eqnarray}
		We introduce the operator 
		\[
		S_{\mu}f (z) = \displaystyle \int _\mathbb{D}f(w)K(z,w)d\mu(w),\quad f\in H^2,
		\]
		where $K(z,w)= (1-z\bar{w})^{-1}$ is the reproducing kernel of $H^2$. 
		A straightforward computation shows that
		\[
		\langle S_\mu f , f\rangle_{H^2} = \displaystyle \int _\mathbb{D}|f(z)|^2d\mu (z)= \| J_\mu f\|^2_{L^2(\mu)}.
		\]
		This implies that $S_\mu$ is bounded, compact, or belongs to a Schatten  class $\cS_{p}(H^2)$ if and only if $J_\mu$ is. In \cite{Lue1}, Luecking proved that $S_\mu \in \cS_p(H^2)$, for $p>0$, if and only if 
		\[
		\displaystyle \sum _{n,k} \left (\frac{\mu (R(I_{n,k}))}{| I_{n,k} |}\right )^{p}<\infty.
		\]
		It is worth noting that Luecking's characterization can also be expressed  in terms of Carleson  windows $W(I_{n,k})$ (see \cite{LLQRJFA}). However, this is no longer the case for Carleson boundedness criterion (see \cite{PR}).\\
		\\
		Now, we fix and recall some notations. Let $\mu$  be a positive Borel measure on $\D$ which satisfies condition (\ref {compact}), and let $(I_n(\mu))_{n\geqs 1}$ be an enumeration of  $(I_{n,k})_{n,k}$ such that the sequence $\left(\dfrac{\mu(W(I_n(\mu)))}{| I_n(\mu)|}\right)_{n\geqs 1}$ is nonincreasing. According to the notation used in the introduction, we define
		\[
		\tau _n(\mu) \colonequals \frac{\mu(W(I_n(\mu))}{| I_n(\mu)|}.
		\]

		\noindent The following lemma is the key to prove the first assertion of Theorem \ref{Hardy}.
		\begin{lem}\label{KL}
			Let $\mu$ be a positive Borel measure on $\mathbb{D}$ such that $S_\mu$ is compact on $H^2$. Let $I_n = I_n(\mu)$, and let $(z_n)_{n\geqs 1} $ be a sequence such that $z _n \in R(I_n)$. Fix an index $n\geqs 1$, and set $\mu_n =\displaystyle \sum _{k\geqs n}\mu (R(I_k))\delta _{z _k}$. We have  
			\begin{enumerate}
				\item\label{3}  
				$
				\displaystyle \sup _{j\geqs 1}\frac{\mu_n(W(I_j))}{| I_j|} \leqs \tau_n(\mu).
				$\\
				\item\label{4} $s_n(S_{\mu _1})\ \lesssim \ \tau_n(\mu).$\\
			\end{enumerate}
		\end{lem}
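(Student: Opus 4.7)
The plan is to exploit the dyadic structure of $\{R(I_{n,k})\}_{n,k}$ in both parts. For \ref{3}, I first observe that since this family partitions $\D$, every Carleson window admits the disjoint decomposition $W(I) = \bigsqcup\{R(I_k) : R(I_k)\subseteq W(I)\}$; as each $z_k$ lies in the single cell $R(I_k)$, this yields
\[
\mu_n(W(I_j))\ =\ \sum_{\substack{k\geqs n\\ R(I_k)\subseteq W(I_j)}}\mu(R(I_k)).
\]
I would then pass to the \emph{maximal} (for inclusion) elements $\{M_i\}$ of the family $\{I_k : k\geqs n,\ I_k\subseteq I_j\}$. Because any two dyadic arcs are either nested or disjoint, the $M_i$'s are pairwise disjoint subarcs of $I_j$, so $\sum_i |M_i|\leqs |I_j|$. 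Every relevant $I_k$ is contained in some $M_i$, hence $R(I_k)\subseteq W(M_i)$, which gives $\mu_n(W(I_j))\leqs \sum_i \mu(W(M_i))$. Each $M_i$ carries an index at least $n$ in the enumeration, so $\mu(W(M_i))\leqs \tau_n(\mu)|M_i|$, and summing produces the required bound.

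For \ref{4}, the strategy is to approximate $S_{\mu_1}$ by a finite-rank operator obtained by keeping only the first $n-1$ atoms. Writing $\mu_1 = \mu_n + \sum_{k=1}^{n-1}\mu(R(I_k))\delta_{z_k}$, the reproducing kernel identity shows that $S_{\mu_1}-S_{\mu_n}$ is a linear combination of $K(\cdot,z_1),\ldots,K(\cdot,z_{n-1})$, hence of rank at most $n-1$. The min-max characterization of singular values then gives $s_n(S_{\mu_1})\leqs\|S_{\mu_n}\|$. Since $S_{\mu_n}$ is positive and $\langle S_{\mu_n}f,f\rangle_{H^2} = \|J_{\mu_n}f\|_{L^2(\mu_n)}^2$, Carleson's embedding theorem in its dyadic form (recalled just above the lemma) yields
\[
\|S_{\mu_n}\|\ \asymp\ \sup_{j\geqs 1}\frac{\mu_n(W(I_j))}{|I_j|},
\]
which is bounded by $\tau_n(\mu)$ thanks to \ref{3}.

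The delicate step will be the dyadic selection in \ref{3}: naive summation over all $I_k\subseteq I_j$ is useless because $\sum_k|I_k|$ diverges, and one really needs the observation that restricting to maximal arcs within $\{k\geqs n\}$ produces a disjoint subfamily of $I_j$ whose total length is controlled by $|I_j|$. This is what converts the per-cell control $\mu(W(I_k))\leqs \tau_n(\mu)|I_k|$ into a global bound on $W(I_j)$. Once this is secured, part \ref{4} follows by combining it with Carleson's theorem and the standard approximation inequality $s_n(T)\leqs \|T-R\|$ for $R$ of rank less than $n$.
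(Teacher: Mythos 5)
Your proof is correct, and part (2) coincides with the paper's argument: subtract the finite-rank operator $S_{\mu_1-\mu_n}$, bound $s_n(S_{\mu_1})$ by $\|S_{\mu_n}\|$, and combine the dyadic Carleson embedding theorem with part (1). Part (1), however, takes a genuinely different route. The paper splits into cases: for $j\geqs n$ the bound is immediate from $\mu_n(W(I_j))\leqs \mu(W(I_j))$ and the monotonicity of the rearranged sequence; for $j< n$ it argues by contradiction, using $\mu_n(R(I_j))=0$ and the mediant inequality to pass from a window violating the bound to one of its two dyadic children whose ratio is at least as large, and noting that this descent would have to stay forever inside the finite index set $\{1,\dots,n-1\}$, which is absurd. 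Your stopping-time argument, selecting the maximal dyadic arcs $M_i$ in $\{I_k : k\geqs n,\ I_k\subseteq I_j\}$, using their pairwise disjointness to get $\sum_i |M_i|\leqs |I_j|$ and the per-arc bound $\mu(W(M_i))\leqs \tau_n(\mu)|M_i|$ from the nonincreasing rearrangement, treats all $j$ uniformly and is direct rather than by contradiction. Both proofs rest on the same dyadic fact (two dyadic arcs are nested or disjoint); yours is constructive and makes the constant $1$ transparent, while the paper's infinite-descent version is a bit shorter to write. One point worth making explicit in your write-up is the opening identity $\mu_n(W(I_j))=\sum_{k\geqs n,\ R(I_k)\subseteq W(I_j)}\mu(R(I_k))$: it relies on the equivalence $z_k\in W(I_j)\iff I_k\subseteq I_j\iff R(I_k)\subseteq W(I_j)$, which in the case $I_j\subsetneq I_k$ uses $1-|z_k|>|I_k|/4\pi\geqs |I_j|/2\pi$. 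The paper uses the same identity, so this is a shared (and easily justified) step rather than a gap.
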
	
		
		\begin{proof}
			We begin with part \eqref{3}. First, note that if $n=1$, then $\tau _j(\mu_1)= \tau _j(\mu)$ and the result is obvious.  Now, suppose that  $n\geqs 2$. \\ \\
			{For $j\geqs 1$, let $\Gamma_j=\lbrace k\geqs n \colon R(I_k) \subset W(I_j) \rbrace$. Then
				\begin{eqnarray*}
					\mu_n(W(I_j)) &=& \sum_{k \in \Gamma_j} \mu_n(R(I_k)) = \sum_{k \in \Gamma_j} \mu(R(I_k))  \leqs \mu(W(I_j)).
				\end{eqnarray*}	
				Thus for all $j\geqs n$, we have
			}
			\[
			\dfrac{\mu_n(W(I_j))}{| I_j |} \leqs \dfrac{\mu(W(I_j))}{| I_j |} \leqs \dfrac{\mu(W(I_n))}{| I_n |} .
			\]
			Let  $ j\in \lbrace 1,\ldots,n-1\rbrace$.  If we assume that
			\begin{eqnarray}\label{EL1}
			\dfrac{\mu_n(W(I_j))}{| I_j |} > \dfrac{\mu(W(I_n))}{| I_n |},
			\end{eqnarray}
			we will end up with a contradiction. Indeed, we will prove that there exists $j_1\in  \lbrace 1,\ldots,n-1\rbrace$ such that $W(I_{j_1})\varsubsetneq W(I_j)$ and $\dfrac{\mu (W(I_n))}{| I_n |} < \dfrac{\mu_n(W(I_{j_1}))}{| I_{j_1}|}$, which is absurd since one can repeat this operation indefinitely.\\
			
			Suppose that assumption \eqref{EL1} is satisfied, and write $W(I_j)= R(I_j)\cup W(I_{j_1})\cup W(I_{j'_1})$, where
			\[
			| I_{j_1} |= | I_{j'_1}|= | I_{j}|/2. 
			\]
			Without loss of generality, we assume that $\mu_n(W(I_{j_1}))\geqs \mu_n(W(I_{j'_1}))$. Then, since $\mu_n(R(I_j))=0$, we have
			\begin{eqnarray*}
				\dfrac{\mu_n(W(I_j))}{| I_j|} &=& \dfrac{\mu_n(W(I_{j_1}))+\mu_n(W(I_{j'_1}))}{| I_{j_1}|+| I_{j'_1}|}\leqs \dfrac{\mu_n(W(I_{j_1}))}{| I_{j_1}|}.
			\end{eqnarray*}
			This implies  $\dfrac{\mu (W(I_n))}{| I_n|} < \dfrac{\mu_n(W(I_{j_1}))}{| I_{j_1}|}$ and then $j_1\in  \lbrace 1,\ldots,n-1\rbrace$. The proof of \eqref{3} is complete.\\
			
			To prove \eqref{4}, recall that $s_n(S_{\mu_1}) =  \inf\left\{\left\| S_{\mu_1}- R\right\|\ :\ \text{rank}\  R<n \right\} $. For $R = S_{\mu_1 -\mu_n}$, we have 
			\[
			s_n(S_{\mu_1}) \leqs \| S_{\mu_1} - S_{\mu_1-\mu_n}\| = \|S_{\mu _n}\| \lesssim  \displaystyle \sup _{j\geqs 1}\frac{\mu_n(W(I_j))}{| I_j|}.
			\]
			The second inequality comes from the embedding Carleson's Theorem. Using the first part of the lemma, we obtain $  s_n(S_{\mu_1}) \lesssim \tau _n(\mu)$.
		\end{proof}
		The proof of the second assertion of Theorem \ref{Hardy} requires the following lemma. We include the proof for completeness.
		\begin{lem}\label{0}
			Let $H$ and $K$ be two Hilbert spaces and  $T: H\to K$  a compact operator. Suppose that there exist $(u_n)_{n\geqs 1}\subset H$ and $(v_n)_{n\geqs 1}\subset K$ such that 
			\[
			\displaystyle \sum _n |\langle u_n, f \rangle_H |^2\leqs C\|f\|_H^2 \ \text{and} \ \displaystyle \sum _n |\langle v_n, g\rangle_K |^2\leqs C\|g\|_K^2,\quad f\in H,\  g\in K,
			\]
			for some constant $C>0$. Then  we have 
			\[
			\displaystyle \sum _{j=1}^n \left|\langle Tu_j, v_j\rangle_{K} \right|\leqs C^2 \displaystyle \sum _{j=1}^n s_j(T).
			\]
		\end{lem}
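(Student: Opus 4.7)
The plan is to reinterpret the left-hand side as a sum of diagonal entries of a bounded operator on $\ell^2$ and then appeal to the classical Ky Fan trace inequality. First I would package the two Bessel families into bounded operators $U\colon H\to\ell^2$ and $V\colon K\to\ell^2$ defined by $Uf=(\langle f,u_j\rangle_H)_{j\geqs 1}$ and $Vg=(\langle g,v_j\rangle_K)_{j\geqs 1}$; the Bessel-type hypotheses become exactly $\|U\|,\|V\|\leqs\sqrt{C}$, and an adjoint computation shows $U^*\delta_j=u_j$ and $V^*\delta_j=v_j$, where $(\delta_j)$ is the canonical basis of $\ell^2$. Setting $A\colonequals VTU^*$, which is compact on $\ell^2$, the inner products $\langle Tu_j,v_j\rangle_K$ become precisely the diagonal entries $\langle A\delta_j,\delta_j\rangle_{\ell^2}$ of $A$ in the canonical basis.

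The problem therefore reduces to bounding $\sum_{j=1}^n|\langle A\delta_j,\delta_j\rangle|$ by a constant multiple of $\sum_{j=1}^n s_j(T)$. To this end, choose unimodular scalars $\alpha_j\in\C$ with $\bar{\alpha}_j\langle A\delta_j,\delta_j\rangle=|\langle A\delta_j,\delta_j\rangle|$ and note that both $(\delta_j)_{j=1}^n$ and $(\alpha_j\delta_j)_{j=1}^n$ are orthonormal sequences. Ky Fan's maximum principle, in the form recorded in \cite{GGK}, then yields
$$\sum_{j=1}^n|\langle A\delta_j,\delta_j\rangle|=\sum_{j=1}^n\langle A\delta_j,\alpha_j\delta_j\rangle\leqs \sum_{j=1}^n s_j(A).$$
Combining this with the ideal inequality $s_j(A)=s_j(VTU^*)\leqs\|V\|\,\|U^*\|\,s_j(T)\leqs C\,s_j(T)$ produces the stated bound (even with the better constant $C$ in place of $C^2$).

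The main delicate point is the passage from the signed sum appearing in Ky Fan's inequality to the absolute-value sum in the conclusion. The unimodular trick handles this cleanly, but one has to be careful with the complex conjugation convention in order to identify $\bar{\alpha}_j\langle A\delta_j,\delta_j\rangle$ with $\langle A\delta_j,\alpha_j\delta_j\rangle$ and to verify that multiplication by the unimodular scalars preserves orthonormality. Beyond this, the argument is a routine identification of inner products with matrix entries, together with the standard behaviour of singular values under composition with bounded operators.
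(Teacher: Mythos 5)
Your proof is correct and follows essentially the same route as the paper: you package the Bessel systems into bounded operators ($U,V$ into $\ell^2$ versus the paper's $A,B$ into $H,K$ via orthonormal bases, a purely cosmetic difference), conjugate $T$ to $VTU^*$, and invoke the Ky Fan-type inequality $\sum_{j=1}^n\left|\langle S x_j,y_j\rangle\right|\leqs\sum_{j=1}^n s_j(S)$ together with $s_j(VTU^*)\leqs\|V\|\,\|U^*\|\,s_j(T)$. Your unimodular-scalar step merely rederives the absolute-value form of the inequality that the paper cites directly from \cite{GGK}, and your observation that the constant improves to $C$ (the paper's $C^2$ coming from the crude bound $\|A\|\leqs C$ instead of $\sqrt{C}$) is accurate.
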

		
		\begin{proof}
			Let $(e_n)_n$ and $(f_n)_n$ be two orthonormal bases of $H$ and $K$, respectively. Let $A$ and $B$ be the operators given by
			\[
			Af=\displaystyle\sum_n\langle u_n,f\rangle_H e_n \quad \text{and}\quad   Bg=\displaystyle\sum_n\langle v_n,g\rangle_K f_n.
			\]
			By assumptions, $A$ and $B$ are bounded and $\|A\|, \|B\| \leqs C$. Note that 
			the adjoint operators $A^*$ and $B^*$ satisfy $A^*e_n=u_n$ and $B^*f_n=v_n$. By Theorem 3.5 of \cite{GGK}, we have 
			\[
			\displaystyle \sum_{j=1}^{n}\left|\langle BTA^* e_j,f_j\rangle \right|  \leqs \displaystyle \sum_{j=1}^{n} s_j(BTA^*).
			\]
			Thus
			\begin{eqnarray*}
				\displaystyle \sum_{j=1}^{n}\left|\langle Tu_j,v_j \rangle \right| &=& \displaystyle \sum_{j=1}^{n}\left|\langle TA^* e_j,B^*f_j\rangle \right |\\
				& = & \displaystyle \sum_{j=1}^{n}\left|\langle BTA^* e_j,f_j\rangle \right|\\
				&\leqs& \displaystyle \sum_{j=1}^{n} s_j(BTA^*)\\
				&\leqs& \|B\|\|A^*\|\displaystyle \sum_{j=1}^{n}s_j(T)\leqs C^2 \displaystyle \sum_{j=1}^{n}s_j(T).
			\end{eqnarray*} 
			
		\end{proof}
		
		\noindent The classical Littlewood-Paley identity \cite{Gar} states that
		\[
		\| f -f(0)\|^2 _{H^2}=  \displaystyle \int_{\mathbb{D}}|f'(z)|^2\log (1/|z|^2)dm(z).
		\]
		This  gives rise to the following formula which relates the operators $I_g$ and $S_{\nu _g}$. We have
		\begin{eqnarray}\label{H-L}
		\langle I^*_gI_g f, f \rangle_{H^2}  = \displaystyle \int _{\mathbb{D}}|f(z)|^2|g'(z)|^2\log (1/|z|^2)dm(z)
		\asymp \langle S_{\nu_g}f,f \rangle_{H^2}.
		\end{eqnarray}
		
		\begin{proof}[\textbf{Proof of Theorem \ref{Hardy}}]
			It is immediate, from formula (\ref{H-L}), that if $f\in H^2$ then $fg'\in \cA^2_1$. We have seen in the proof of the upper bound result from Theorem \ref{Bergman} that
			there exist a sampling sequence $Z=(z_n)_{n\geqs 1}$ for $\cA^2_1$ and an integer $N$ such that each $R(I_n)$ contains  $N$ points of $Z$ denoted by $(z_{n,i})_{n\geqs 1}$, where $i=1,\ldots,N$. Then
			\begin{eqnarray*}
				\displaystyle \int _{\mathbb{D}}|f(z)g'(z)|^2(1-|z|^2)dm(z)& \asymp& \displaystyle \sum _{n\geqs 1} |f(z_{n})g'(z _{n})|^2(1-|z _{n}|^2)^3\\
				& \lesssim &  \displaystyle \sum _{n\geqs 1} |f(z_{n})|^2 \displaystyle \int _{R(I_n)}|g'(z)|^2(1-|z|^2)dm(z)\\
				& = &  \displaystyle \sum _{n\geqs 1} \displaystyle \sum _{i=1}^N|f(z_{n,i})|^2 \displaystyle \int _{R(I_n)}|g'(z)|^2(1-|z|^2)dm(z)\\
				& = & \langle \displaystyle \sum _{i=1}^N S_{\nu_i} f,f\rangle_{H^2},
			\end{eqnarray*}
			where $\nu_i = \displaystyle \sum _{n\geqs 1}\nu_g(R(I_n))\delta _{z_{n,i}}.$ {Using inequality \eqref{eq(2.1)}, we obtain}
			\[
			s_{Nn}^2(I_g)\asymp s_{Nn}(S_{\nu_g}) \lesssim  \max_{1\leqs i \leqs N} s_n(S_{\nu_i}).
			\]
			{By} Lemma \ref{KL}, we deduce  $s_{Nn}^2(I_g) \lesssim \tau _n(\nu_g)$ as desired and the proof of assertion \ref{i} is complete.\\
			
			\noindent To prove assertion \ref{ii}, we use Lemma \ref{0}. Let $\xi_n$ be the center of  $R(I_n)$ and set
			\[
			u_n(z)=v_n(z)=\frac{(1-|\xi_n|^2)^{3/2}}{(1-\bar{\xi}_n z)^2}.
			\]
			For $f \in H^2$, we have
			\begin{eqnarray*}
				|\langle u _n, f\rangle _{H^2}|^2 &  \lesssim &  | u _n (0)f(0)|^2+\left| \displaystyle \int _{\mathbb{D}}\frac{(1-|\xi_n|^2)^{3/2}}{(1-\bar{\xi}_n z)^3} \overline{f'(z)} (1-|z|^2)dm (z)\right|^2\\
				& = & (1-|\xi_n|^2)^3 |f(0)|^2+|\langle k_{\xi_n}^1,f' \rangle_{\mathcal{A}^2_1} |^2.
			\end{eqnarray*}
			Thus 
			\begin{eqnarray*}
				\displaystyle \sum _n |\langle u _n, f\rangle _{H^2}|^2 &  \lesssim & \displaystyle \sum _n (1-|\xi_n|^2)^3 |f(0)|^2+ \displaystyle \sum _n |\langle k_{\xi_n}^1,f' \rangle_{\mathcal{A}^2_1} |^2\\
				& \lesssim & \displaystyle \sum _n (1-|\xi_n|^2)^3 |f(0)|^2+\|f'\|^2_{\cA^2_1}\\
				& \lesssim & \|f\|^2_{H^2}.
			\end{eqnarray*}
			The second inequality follows from the fact that $(\xi _n)_n$ is a separated sequence.
			Then, by Lemma \ref{0} applied to $S_{\nu _g}$, there exists an absolute constant $c>0$ such that
			\[
			\displaystyle c \sum _{j=1}^n |\langle S_{\nu_g} u_j, u_j\rangle _{H^2}| \leqs \displaystyle \sum _{j=1}^n s_j(S_{\nu_g}).
			\]
			Furthermore
			\begin{eqnarray*}
				\langle S_{\nu_g} u_n,u_n\rangle_{H^2} & = & \displaystyle \int_\mathbb{D} \dfrac{(1-|a_n|^2)^3}{|1-\bar{a}_n z|^4}|g'(z)|^2(1-|z|^2) dm(z) \\
				& \geqs & \displaystyle \int_{W(I_n)}\dfrac{(1-|a_n|^2)^3}{|1-\bar{a}_n z|^4}|g'(z)|^2(1-|z|^2) dm(z)\\
				& \gtrsim & \dfrac{1}{|I_n|} \displaystyle \int_{W(I_n)} |g'(z)|^2(1-|z|^2) dm(z).
			\end{eqnarray*}
			Combining these two facts with formula (\ref{H-L}), we get 
			\[
			\displaystyle  \sum _{j=1}^n  \frac{\nu_g(W(I_j))}{|I_j|} \lesssim  \displaystyle  \sum _{j=1}^n s_j(S_{\nu_g}) \lesssim   \displaystyle  \sum _{j=1}^n s^2_j({I_{g}}).
			\]
		\end{proof}
		
		
		We turn now to prove Theorem \ref{B}, we start with the proof of the first statement which is completely based on Theorem \ref{Hardy}.
		
		\begin{proof}[\textbf{Proof of assertion \ref{2i} of Theorem \ref{B}} ]
			Suppose that $(s_n(I_g))_n \in \cR _\gamma$ for some $\gamma \in (0,1/2)$. Then  there exists $(x_n)_n$ a nonincreasing sequence  of positive numbers such that ${n^{\gamma} x_n}$ increases to infinity and  $s_n(I_g) \asymp x_n$.  {For an integer $p\geqs 1$, we have  
				\begin{eqnarray*}
					p^{-\gamma}x_n \leqslant x_{pn} \leqslant x_n. 
				\end{eqnarray*}
			}
			By assertion \ref{i} of Theorem \ref{Hardy}, we have 
			\begin{eqnarray}\label{UB}
			{x^2_n  \asymp s_{np}^2(I_g)} \lesssim \tau_n(\nu_g).
			\end{eqnarray}	
			Moreover, Theorem \ref{Hardy} implies
			\begin{eqnarray*}\label{LB}
				n\tau_n(\nu_g) \leqs \displaystyle \sum_{j=1}^n \tau_j(\nu_g)  & \lesssim & \displaystyle  \sum_{j=1}^n s_j^2(I_g)\\
				&\lesssim & {\displaystyle  \sum_{j=1}^n x^2_j}  \\
				& \leqs & n^{2\gamma} x^2_n \displaystyle  \sum_{j=1}^n \dfrac{1}{j^{2\gamma}}. 
			\end{eqnarray*}	
			{The last inequality follows from the fact that $(j^{2\gamma}x_j)_j $ is increasing. Since $2\gamma <1$, we have
				\begin{eqnarray*}
					\sum_{j=1}^n \dfrac{1}{j^{2\gamma}} &\asymp& n^{1-2\gamma}.	
				\end{eqnarray*}	
				Thus $n\tau_n(\nu_g)	\lesssim n x^2_n.$}
			This along with inequality (\ref{UB}) leads to {$ x_n \asymp \tau_n(\nu_g)$.}\\ \\
			Now, suppose that  $(\tau_n(\nu_g))_n \in \cR _{2\gamma}$ and let $(x_n)_n$ be a nonincreasing sequence  of positive numbers such that $n^{2\gamma} x_n$ increases to infinity and  $\tau_n(\nu_g) \asymp x_n$. It follows at once from assertion \ref{i} of Theorem \ref{Hardy} that 
			{
				\begin{eqnarray*}
					s_n^2(I_g) \lesssim x_{[\frac{n}{p}]} \asymp x_n. 
				\end{eqnarray*}
			}
			Let  $\kappa$ be a large constant. Assertion \ref{ii} of Theorem \ref{Hardy} yields
			\begin{eqnarray*}
				\displaystyle \sum_{j=1}^{\kappa n} x_j & \lesssim & \displaystyle \sum_{j=1}^n s_j^2(I_g) +  \displaystyle \sum_{j=n+1}^{\kappa n} s_j^2(I_g) \\
				& \lesssim & \displaystyle \sum_{j=1}^n x_j + \kappa n s_n^2(I_g).
			\end{eqnarray*}
			{Using the same line of reasoning as in the previous part, we obtain $\displaystyle\sum_{j=1}^n x_j \lesssim n x_n.$ Therefore	
				\begin{eqnarray*}
					\displaystyle \sum_{j=1}^{\kappa n} x_j &\lesssim & n x_n + \kappa n s_n^2(I_g).	
				\end{eqnarray*}
				In contrast
				\begin{eqnarray*}
					\displaystyle \sum_{j=1}^{\kappa n} x_j  & \geqs  & \kappa n x_{\kappa n} \\ 
					&\geqs& \kappa^{1-2\gamma} n x_n.
				\end{eqnarray*}
				The first inequality follows from the fact  $(x_j)_j $ is nonincreasing and the second one uses the fact that $(j^{2\gamma}x_j)_j $ is increasing. 
				Thus
				\[
				\kappa^{1-2\gamma} x_n \lesssim x_n + \kappa s_n^2(I_g).
				\]
			}
			For  $\kappa$ large enough, we obtain the result.
		\end{proof}
		
		The proof of the second statement of Theorem \ref{B} requires some preparatory results. The following technical lemma is a generalization of Proposition 3.3 in \cite{LLQRJFA}.
		\begin{lem}\label{lemLLQP}
			Let $h: [0,\infty[\to [0,\infty[$ be an increasing function such that $h(0)=0$. Suppose that there exist $\varepsilon \in (0,1)$ and $p\geqs 1$ for which $t\to h(t^\varepsilon)$ is concave and $t\to h(t^p)$ is convex. Let $\mu $ be a positive Borel measure on $\mathbb{D}$. There exists $B>0$, which depends on $\varepsilon$ and $p$, such that  
			\[
			\displaystyle \sum _{j}h\left (\frac{\mu(W{(I_j)})}{| I_j |}\right ) \leqs \displaystyle \sum _{j}h\left (B\frac{\mu(R{(I_j)})}{| I_j|}\right ).
			\]
		\end{lem}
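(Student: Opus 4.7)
The plan is to exploit the disjoint dyadic decomposition $W(I_j)=\bigsqcup_{J\subseteq I_j} R(J)$, the union being over all dyadic subarcs $J$ of $I_j$, which gives
\[
b_{I_j}\colonequals \frac{\mu(W(I_j))}{|I_j|}\;=\;\sum_{J\subseteq I_j} a_J\cdot \frac{|J|}{|I_j|}, \qquad a_J\colonequals \frac{\mu(R(J))}{|J|},
\]
and then to reduce the desired inequality to a swappable double sum whose weights decay geometrically. First I would extract the two one-sided scaling bounds the hypotheses provide. Concavity of $\tilde h(t)\colonequals h(t^\varepsilon)$ with $\tilde h(0)=0$ makes $\tilde h$ subadditive and yields $\tilde h(\lambda t)\leqs\lambda \tilde h(t)$ for $\lambda\geqs 1$, which translates to $h(Ct)\leqs C^{1/\varepsilon}h(t)$ for $C\geqs 1$. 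Convexity of $H(t)\colonequals h(t^p)$ with $H(0)=0$ gives $H(\lambda t)\leqs \lambda H(t)$ for $\lambda\in[0,1]$, which translates to $h(ct)\leqs c^{1/p}h(t)$ for $c\in[0,1]$, equivalently $h(Ct)\geqs C^{1/p}h(t)$ for $C\geqs 1$.

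Next, for each fixed $I=I_j$, I would split the weight $|J|/|I|=(|J|/|I|)^{1-s}(|J|/|I|)^{s}$ with a parameter $s\in(1-\varepsilon,1)$ and apply Hölder's inequality with conjugate exponents $1/\varepsilon$ and $1/(1-\varepsilon)$. The constraint $s>1-\varepsilon$ is precisely what makes the second Hölder factor $\sum_{J\subseteq I}(|J|/|I|)^{s/(1-\varepsilon)}=\sum_{k\geqs 0}2^{k(1-s/(1-\varepsilon))}$ converge, there being $2^k$ dyadic subarcs at depth $k$ each of weight $2^{-k}$. This yields
\[
b_I^{1/\varepsilon}\;\lesssim\;\sum_{J\subseteq I} a_J^{1/\varepsilon}\bigl(|J|/|I|\bigr)^{(1-s)/\varepsilon}.
\]
Applying $\tilde h$ to both sides, then pulling out the leading constant by $\tilde h(\lambda t)\leqs\lambda\tilde h(t)$ ($\lambda\geqs 1$) and pushing it inside the sum by subadditivity of $\tilde h$, converts this into
\[
h(b_I)\;\lesssim\;\sum_{J\subseteq I} h\bigl(a_J(|J|/|I|)^{1-s}\bigr).
\]
Since $|J|/|I|\leqs 1$, the lower regularity bound $h(ct)\leqs c^{1/p}h(t)$ then extracts the small-weight factor as $(|J|/|I|)^{(1-s)/p}h(a_J)$.

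Finally I would exchange the order of summation. For each dyadic $J$ there is exactly one ancestor $I\supseteq J$ at each generation, so
\[
\sum_{j}h(b_{I_j})\;\lesssim\;\sum_{J}h(a_J)\sum_{k=0}^\infty 2^{-k(1-s)/p}\;=\;D\sum_{J}h(a_J),
\]
with $D$ finite because $s<1$; a convenient choice like $s=1-\varepsilon/2$ fixes $D$ in terms of $\varepsilon$ and $p$ alone. To absorb $D$ inside $h$, take $B=D^p$ and invoke once more $h(Ba_J)\geqs B^{1/p}h(a_J)=D\,h(a_J)$. The main obstacle is the balancing act between the two hypotheses: the subadditivity of $\tilde h$ is what allows the Hölder sum to move inside $h$, but doing so leaves an extra factor $(|J|/|I|)^{1-s}<1$ attached to $a_J$; the convexity of $h(t^p)$ must then be strong enough to convert this into the summable geometric decay $(|J|/|I|)^{(1-s)/p}$, and the admissible window $s\in(1-\varepsilon,1)$ is exactly what makes both the initial Hölder estimate and the final sum exchange converge simultaneously.
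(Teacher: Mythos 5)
Your proof is correct and follows essentially the same route as the paper's: the disjoint dyadic decomposition of $W(I)$ into inner halves, a Hölder estimate with exponents $1/\varepsilon$ and $1/(1-\varepsilon)$ whose convergence condition is exactly your window $s\in(1-\varepsilon,1)$ (the paper's parameter $a\in(1,2)$ with $a^{1/(1-\varepsilon)}>2$ is the substitution $a=2^{s}$), followed by subadditivity of $h(t^{\varepsilon})$, the bound $h(ct)\leqs c^{1/p}h(t)$ from convexity of $h(t^{p})$, and the ancestor-counting Fubini step. No gaps; the final absorption of the constant via $h(Bt)\geqs B^{1/p}h(t)$ is a detail the paper leaves implicit and you handle correctly.
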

		\begin{proof}
			We use the argument given in \cite{LLQRJFA}. Let $\alpha = 1/\varepsilon$ and let $\beta =1/(1-\varepsilon)$ be the conjugate of $\alpha$. Write
			\[
			{W(I_{n,j})}=\bigcup_{l\geqs n}\bigcup_{k\in H_{l,n,j}}  {R(I_{l,k})},\quad n\geqs 1\ \mbox{and}\ 1\leqs j\leqs 2^n,
			\]
			where $H_{l,n,j}=\left\lbrace k\in\lbrace 1,\ldots,2^l \rbrace; \dfrac{j}{2^n}\leqs\dfrac{k}{2^l}<\dfrac{j+1}{2^n}\right\rbrace.$ Consider $a \in (1,2)$ such that $2< a^\beta$. It is proved in \cite{LLQRJFA} that 
			\[
			\mu({W(I_{n,j})})^\alpha \lesssim \displaystyle \sum _{l\geqs n; \ k\in H_{l,n,k}}a^{(l-n)\alpha }\mu({R(I_{l,k})})^\alpha.
			\]
			Recall that the function $h_{1/\alpha}(t) \colonequals h(t^{1/\alpha})$ is increasing and concave. Consequently,
			\begin{eqnarray*}
				h(2^n\mu({W(I_{n,j})}) )& =& h_{1/\alpha} \left ( (2^n\mu({W(I_{n,j})}))^\alpha \right )\\
				& \lesssim & \displaystyle \sum _{l\geqs n; \ k\in H_{l,n,k}}  h_{1/\alpha} \left (a^{(l-n)\alpha}2^{n\alpha}\mu({R(I_{l,k})})^\alpha \right )\\
				& = & \displaystyle \sum _{l\geqs n; \ k\in H_{l,n,k}}  h\left (\left (\frac{a}{2}\right )^{l-n}2^l\mu({R(I_{l,k})}) \right )\\
				& \leqs & \displaystyle \sum _{l\geqs n; \ k\in H_{l,n,k}}  \left (\frac{a}{2}\right ) ^{(l-n)/p}h(2^l\mu({R(I_{l,k})}) ).\\
			\end{eqnarray*}
			The last inequality follows from the fact $h(t^p)$ is convex. Then we obtain 
			\begin{eqnarray*}
				\sum_{n= 1}^\infty\sum_{j=0}^{2^n-1}h\left(2^n\mu({W(I_{n,j})})\right)&\lesssim &\sum_{l=1}^\infty\sum_{k=0}^{2^l-1}\left(\sum_{(n,j):l\geqs n,k\in H_{l,n,j}}\left (\frac{a}{2}\right ) ^{(l-n)/p}\right)h\left(2^l\mu({R(I_{l,k})})\right)\\
				&\leqs&C{(p,\varepsilon)}\sum_{l=1}^\infty\sum_{k=0}^{2^l-1}h\left(2^l\mu({R(I_{l,k})})\right).
			\end{eqnarray*}
		\end{proof}
		
		\begin{theo}\label {trace}
			Let $\mu$ be a positive Borel measure on $\mathbb{D}$ such that $J_\mu$ is compact. Let $h: [0,\infty[\to [0,\infty[$ be a convex  increasing function with $h(0)=0$ and such that $t\to h(t^\varepsilon)$ is concave for some $\varepsilon \in (0,1)$. Then there exists $B>0$ which depends only on $\varepsilon $ such that 
			\[
			\displaystyle \sum _{j}h\left (\frac{1}{B}\sqrt{\tau_j(\mu)}\right ) \leqs \displaystyle \sum _{j}h(s_j(J_\mu)).
			\]
		\end{theo}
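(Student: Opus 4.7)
The plan is to combine the Ky Fan type inequality of Lemma~\ref{0}, applied to a well-chosen pair of Bessel sequences, with the window-to-inner-half transfer provided by Lemma~\ref{lemLLQP}. Throughout, denote by $\tilde\tau_j(\mu)$ the nonincreasing rearrangement of the family $(\mu(R(I_{n,k}))/|I_{n,k}|)_{n,k}$, by $I_j^R$ the dyadic arc realising $\tilde\tau_j(\mu)$, and by $\xi_j$ the pseudohyperbolic centre of $R(I_j^R)$.

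The heart of the argument is the weak majorization
\[
\sum_{j=1}^n\sqrt{\tilde\tau_j(\mu)}\lesssim \sum_{j=1}^n s_j(J_\mu),\qquad n\geqs 1.
\]
To apply Lemma~\ref{0} with $T=J_\mu$, I would take $u_j\colonequals k_{\xi_j}$, the normalized reproducing kernel of $H^2$, and $v_j\colonequals u_j\chi_{R(I_j^R)}/\|u_j\chi_{R(I_j^R)}\|_{L^2(\mu)}$, discarding the indices with $\mu(R(I_j^R))=0$, for which $\tilde\tau_j(\mu)=0$. Pairwise disjointness of the sets $R(I_{n,k})$ makes $(v_j)$ orthonormal in $L^2(\mu)$, while separation of the centres $(\xi_j)$ in the pseudohyperbolic metric makes $(u_j)$ a Bessel system in $H^2$ with an absolute Bessel bound (the Carleson embedding for atomic measures). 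From $1-|\xi_j|^2\asymp |I_j^R|$ and $|1-\bar\xi_j z|\asymp |I_j^R|$ uniformly for $z\in R(I_j^R)$ one deduces $|k_{\xi_j}(z)|^2\asymp 1/|I_j^R|$ on $R(I_j^R)$, whence $|\langle J_\mu u_j,v_j\rangle_{L^2(\mu)}|=\|u_j\chi_{R(I_j^R)}\|_{L^2(\mu)}\asymp \sqrt{\tilde\tau_j(\mu)}$. Lemma~\ref{0} then yields an absolute constant $A_1$ for which the displayed weak majorization holds, and since $h$ is nondecreasing convex with $h(0)=0$, the Hardy--Littlewood--P\'olya principle upgrades it to
\[
\sum_{j\geqs 1} h\!\left(\frac{\sqrt{\tilde\tau_j(\mu)}}{A_1}\right)\leqs \sum_{j\geqs 1} h(s_j(J_\mu)).
\]

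The second step transfers this estimate from $\tilde\tau_j$ to $\tau_j$ via Lemma~\ref{lemLLQP}. Set $B\colonequals A_1\sqrt{B_0}$, where $B_0$ is the constant appearing in Lemma~\ref{lemLLQP}, and define $\Phi(t)\colonequals h(\sqrt{t}/B)$. Then $\Phi$ satisfies the hypotheses of that lemma: $\Phi(t^2)=h(t/B)$ is convex (so $p=2$ works), and for any $\varepsilon'\in(0,\min(2\varepsilon,1))$ the function $\Phi(t^{\varepsilon'})=h(t^{\varepsilon'/2}/B)$ is concave, since for $0<\delta\leqs\varepsilon$ the identity $h(u^\delta)=h\big((u^{\delta/\varepsilon})^\varepsilon\big)$ exhibits $h(u^\delta)$ as the composition of the nondecreasing concave function $v\mapsto h(v^\varepsilon)$ with the concave function $u\mapsto u^{\delta/\varepsilon}$, and horizontal rescaling of the argument preserves concavity. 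Applying Lemma~\ref{lemLLQP} to $\Phi$ gives
\[
\sum_j \Phi(\tau_j(\mu))\leqs \sum_j \Phi(B_0\tilde\tau_j(\mu))=\sum_j h\!\left(\frac{\sqrt{\tilde\tau_j(\mu)}}{A_1}\right),
\]
the last equality coming from the choice $\sqrt{B_0}/B=1/A_1$. Chaining with the previous display produces $\sum_j h(\sqrt{\tau_j(\mu)}/B)\leqs \sum_j h(s_j(J_\mu))$, which is the asserted inequality.

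The principal obstacle is the first step: one must simultaneously pair Bessel systems on the two sides of $J_\mu$ so that the diagonal pairings $\langle J_\mu u_j,v_j\rangle$ accurately reflect $\sqrt{\tilde\tau_j(\mu)}$, which rests on the pseudohyperbolic geometry of the dyadic decomposition together with the standard estimates for the $H^2$ reproducing kernel on $R(I_j^R)$. The second step is essentially bookkeeping, the delicate point being the calibration of $B$ which exactly absorbs the loss factor $B_0$ produced by Lemma~\ref{lemLLQP}.
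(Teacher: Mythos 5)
Your overall architecture (Lemma~\ref{0} applied to a Bessel pair to get the weak majorization $\sum_{j\leqs n}\sqrt{\tilde\tau_j(\mu)}\lesssim\sum_{j\leqs n}s_j(J_\mu)$, then the Hardy--Littlewood--P\'olya upgrade, then Lemma~\ref{lemLLQP} applied to $\Phi(t)=h(\sqrt t/B)$ to pass from inner halves to full windows) is exactly the paper's, and your second step --- including the verification that $\Phi(t^2)$ is convex and $\Phi(t^{\varepsilon'})$ is concave for $\varepsilon'<\min(2\varepsilon,1)$, and the calibration of $B$ --- is correct and in fact more carefully written than in the paper. The choice of $v_j$ is also fine.

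However, there is a genuine gap in the first step: the system $u_j=k_{\xi_j}$ of normalized $H^2$ reproducing kernels at the centres of \emph{all} the dyadic boxes is \emph{not} a Bessel system. The Bessel inequality you invoke reads $\sum_j|\langle k_{\xi_j},f\rangle|^2=\sum_j(1-|\xi_j|^2)|f(\xi_j)|^2\lesssim\|f\|_{H^2}^2$, i.e.\ the atomic measure $\sum_j(1-|\xi_j|^2)\delta_{\xi_j}$ must be a Carleson measure for $H^2$. It is not: for an arc $I$, each dyadic generation $n$ with $2^{-n}\lesssim|I|$ contributes about $|I|\,2^n$ centres to $W(I)$, each carrying mass $\asymp 2^{-n}$, so
\[
\sum_{\xi_j\in W(I)}(1-|\xi_j|^2)\ \asymp\ \sum_{n\,:\,2^{-n}\lesssim|I|}|I| \;=\;\infty
\]
(already for $f\equiv 1$ the left-hand side of the Bessel inequality diverges). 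Separation of the centres is true but irrelevant here: for a separated sequence one only gets $\sum_{\xi_j\in W(I)}(1-|\xi_j|^2)^2\lesssim|I|^2$, which is exactly one power short. This is why the paper instead takes $u_j(z)=(1-|\xi_j|^2)^{3/2}(1-\bar\xi_j z)^{-2}$: writing $\langle u_j,f\rangle$ via the Littlewood--Paley identity turns the Bessel sum into $\sum_j(1-|\xi_j|^2)^3|f'(\xi_j)|^2$, which \emph{is} controlled by $\|f'\|^2_{\cA^2_1}\asymp\|f-f(0)\|^2_{H^2}$ using subharmonicity on the disjoint pseudohyperbolic discs around the $\xi_j$. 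These modified kernels still satisfy $|u_j|\asymp|I_j|^{-1/2}$ on $R(I_j)$, so your diagonal pairing computation $|\langle J_\mu u_j,v_j\rangle|\asymp\sqrt{\tilde\tau_j(\mu)}$ survives unchanged; substituting them for $k_{\xi_j}$ repairs the argument, and no finite splitting of the index set can repair it with the kernels as you chose them.
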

		
		\begin{proof} The proof is based on Lemma \ref{0}. Let $\xi_n$ be the center of  ${R(I_n)}$, and set
			\[
			u_n(z)=\frac{(1-|\xi_n|^2)^{3/2}}{(1-\bar{\xi}_n z)^2}\ \mbox{and}\ v_n= c_n \theta_n \chi_{{R(I_n)}},
			\]
			where $\theta _n = \frac{u_n}{|u_n|}$ and $c_n= \frac{1}{\sqrt{\mu ({R(I_n)})}}$ if $\mu ({R(I_n)}) \neq 0$.\\
			We proved before 
			\[
			\displaystyle \sum _n |\langle u _n, f\rangle _{H^2}|^2  \lesssim \|f\|^2_{H^2}, \quad f \in H^2.
			\]
			By Cauchy-Schwarz inequality, we have 
			\[
			\displaystyle \sum _n |\langle v_n, v\rangle _{L^2(\mu)}|^2 \leqs \displaystyle \sum _n c^2_n\mu ({R(I_n)})\displaystyle \int_{{R(I_n)}}|v|^2d\mu =  \| v \| ^2_{L^2(\mu)},\quad v \in L^2(\mu).
			\]
			Note also that 
			\[
			\langle J_\mu u_n, v_n\rangle _{L^2(\mu)} = c_n \displaystyle \int _{{R(I_n)}}|u_n|d\mu \asymp \sqrt{\frac{\mu({R(I_n)})}{| I_n |}}.
			\]
			Applying Lemma \ref{0}, we get
			\[
			\displaystyle  \sum _{j=1}^n  \frac{1}{C}\sqrt{\frac{\mu(R(I_j))}{| I_j |}} \lesssim  \displaystyle  \sum _{j=1}^n s_j(J_\mu),
			\]
			where $C$ is a positive constant. 
			Then by Corollary 3.3 of \cite{GGK}, we obtain
			\[
			\displaystyle \sum _{j}h\left (\frac{1}{C}\sqrt{\frac{\mu(R{(I_j)})}{| I_j|}}\right ) \leqs \displaystyle \sum _{j}h(s_j(J_\mu)).
			\]
			The desired result follows from Lemma \ref{lemLLQP}.
		\end{proof}
		\begin{cor}\label {Ctrace}
			Let $g\in VMOA$. Let $h: [0,\infty[\to [0,\infty[$ be a convex  increasing function with $h(0)=0$ and such that $t\to h(t^\varepsilon)$ is concave for some $\varepsilon \in (0,1)$. Then there exists $B>0$ which depends only on $\varepsilon $ such that 
			\[
			\displaystyle \sum _{j}h\left (\frac{1}{B}\sqrt{\tau_j(\nu_g)}\right ) \leqs \displaystyle \sum _{j}h(s_j(I_g))).
			\]
		\end{cor}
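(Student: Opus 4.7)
The plan is to apply Theorem~\ref{trace} to the specific measure $\mu = \nu_g$ and then transport the inequality from $J_{\nu_g}$ to $I_g$ via the Littlewood--Paley identity. The first observation is that since $g \in VMOA$, the measure $\nu_g$ satisfies the compactness condition~(\ref{compact}), so $J_{\nu_g}$ is a compact embedding and Theorem~\ref{trace} is directly applicable. It yields, for some constant $B_0$ depending only on $\varepsilon$,
\[
\sum_j h\!\left(\frac{1}{B_0}\sqrt{\tau_j(\nu_g)}\right) \leqs \sum_j h(s_j(J_{\nu_g})).
\]

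Next, I would exploit formula (\ref{H-L}). Since $S_{\nu_g} = J_{\nu_g}^* J_{\nu_g}$ (the positive operator associated with the embedding), one has $s_n(S_{\nu_g}) = s_n(J_{\nu_g})^2$. Combining with (\ref{H-L}), which asserts $\langle S_{\nu_g} f, f\rangle_{H^2} \asymp \langle I_g^* I_g f, f\rangle_{H^2}$, and using the min--max characterization of singular values for positive operators, I obtain absolute constants so that $s_n(S_{\nu_g}) \asymp s_n(I_g^* I_g) = s_n(I_g)^2$, whence $s_n(J_{\nu_g}) \leqs C\, s_n(I_g)$ for some absolute $C>0$.

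To finish, I would not apply Theorem~\ref{trace} to $h$ directly (since the constant $C$ sits on the wrong side of the inequality once we replace $s_n(J_{\nu_g})$ by $s_n(I_g)$), but rather to the rescaled function $\tilde h(t) \colonequals h(t/C)$. This $\tilde h$ is convex, increasing, vanishes at $0$, and $t \mapsto \tilde h(t^\varepsilon) = h(t^\varepsilon/C)$ is concave (it is just $h(s^\varepsilon)$ composed with the linear change of variable $s = t/C^{1/\varepsilon}$). Theorem~\ref{trace} applied to $\tilde h$ and $\mu = \nu_g$ gives
\[
\sum_j h\!\left(\frac{1}{C B_0}\sqrt{\tau_j(\nu_g)}\right) \leqs \sum_j h\!\left(\frac{s_j(J_{\nu_g})}{C}\right) \leqs \sum_j h(s_j(I_g)),
\]
where the last inequality uses monotonicity of $h$ and $s_j(J_{\nu_g}) \leqs C s_j(I_g)$. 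Taking $B \colonequals C B_0$ yields the corollary.

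The only non-routine point is verifying that the hypotheses on $h$ are preserved when replacing $h$ by $\tilde h$, which is why I separated the rescaling step; the rest is a direct pairing of Theorem~\ref{trace} with the Littlewood--Paley relation already recorded in~(\ref{H-L}).
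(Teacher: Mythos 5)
Your proposal is correct and follows essentially the same route as the paper, whose entire proof is to apply Theorem~\ref{trace} with $\mu=\nu_g$ and remark that $s_n(I_g)=s_n(J_{\nu_g})$. You are in fact slightly more careful than the paper: since the Littlewood--Paley relation~(\ref{H-L}) only gives $s_n(J_{\nu_g})\asymp s_n(I_g)$ rather than exact equality, your rescaling of $h$ to $\tilde h(t)=h(t/C)$ (which visibly preserves the hypotheses of Theorem~\ref{trace}) is exactly what is needed to absorb the comparison constant.
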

		\begin{proof}
			It suffices to apply Theorem \ref{trace} and to remark that $s_n(I_g)= s_n(J_{\nu_g})$.
		\end{proof}
		
		The following lemma is proved in \cite{EE}.
		\begin{lem}\label{lemEE}
			Let $(a_n)_n$ and $(b_n)_n$ be two positive {nonincreasing} sequences. Suppose  that there exist $\beta _1 >1$ and $\beta _2>1 $ such that $(n^{\beta _1} b_n)_n$ is {nonincreasing} and $(n^{\beta _2}b_n)_n$ is increasing. Let $\beta >\beta _2$  and let $B>0$. If for each positive increasing concave function $h$ such that $h(t^\beta)$ is convex we have
			\[
			\displaystyle \sum _{n}h (\frac{1}{B}a_n) \leqs \displaystyle \sum _{n}h (b_n) \leqs \displaystyle \sum _{n}h(B a_n),
			\]
			then $a_n \asymp b_n$.
		\end{lem}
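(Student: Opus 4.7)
The plan is a Tauberian argument exploiting the regularity of $(b_n)$ to extract pointwise comparison from the summed equivalence. For $p\in[1/\beta,1]$ the power $h(t)=t^p$ is admissible—$t^p$ is concave with $h(0)=0$, and $t^{p\beta}$ is convex since $p\beta\geqs 1$—so the hypothesis gives
\[
B^{-p}\sum_n a_n^p \leqs \sum_n b_n^p \leqs B^{p}\sum_n a_n^p,\qquad p\in[1/\beta,1].
\]
To use the full class of admissible $h$, I would rewrite the hypothesis via the layer-cake identity $\sum_n h(a_n)=\int_0^\infty h'(t)N_a(t)\,dt$, with $N_a(t)\colonequals\#\{n:a_n>t\}$ and $N_b$ defined analogously. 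The admissible derivatives $\phi=h'$ are exactly the nonnegative nonincreasing functions on $(0,\infty)$ for which $u^{1-1/\beta}\phi(u)$ is also nondecreasing; this convex cone is rich enough to extract scale-by-scale information.

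Next, I would translate the regularity of $(b_n)$ into a doubling property for $N_b$. The monotonicity assumptions on $n^{\beta_1}b_n$ and $n^{\beta_2}b_n$ yield $c^{-\beta_2}\leqs b_{[cn]}/b_n\leqs c^{-\beta_1}$, i.e.\ $b_{[cn]}\asymp b_n$ for every fixed $c>0$; inverting, $N_b(ct)\asymp N_b(t)$ uniformly in $t$ for each fixed $c>0$. Consequently the desired conclusion $a_n\asymp b_n$ reduces to $N_a\asymp N_b$.

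Suppose toward a contradiction that $a_n\not\asymp b_n$; by symmetry there is a subsequence $(n_k)$ along which $a_{n_k}/b_{n_k}\to\infty$, so $N_a$ exceeds $N_b$ by an unbounded factor on a range $(s_k,a_{n_k})$ with $s_k\asymp b_{n_k}$. I would test the hypothesis against the admissible capped powers
\[
\phi_k(u)\colonequals\min\bigl(s_k^{-(1-1/\beta)},\,u^{-(1-1/\beta)}\bigr),
\]
for which both monotonicity conditions are immediate since $1-1/\beta\in(0,1)$. The layer-cake integral $\int\phi_k N_a$ picks up the excess, while the doubling of $N_b$ confines the $b$-contribution to a single dyadic scale around $s_k$, producing a factor of order a positive power of $a_{n_k}/b_{n_k}$ that violates the hypothesis for $k$ large enough.

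The main obstacle is this last step: the admissible cone rules out delta masses, so each $\phi_k$ spreads over a range of scales and the primitive $h_k$ is never truly sharp. The assumption $\beta>\beta_2$ leaves exactly enough gap between the decay exponents of $(b_n)$ and the admissible taper rate $1-1/\beta$ for the doubling of $N_b$ to absorb the spreading, while the discrepancy $a_{n_k}\gg b_{n_k}$ still propagates through the integral and produces the contradiction.
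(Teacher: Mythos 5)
First, note that the paper does not prove this lemma at all: it simply states ``The following lemma is proved in \cite{EE}'' and cites that reference, so your proposal can only be judged on its own merits.

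Your framework is the right one: the layer-cake reduction $\sum_n h(a_n)=\int_0^\infty h'(t)N_a(t)\,dt$, the identification of the admissible cone of derivatives $\phi$ (nonincreasing with $u^{1-1/\beta}\phi(u)$ nondecreasing, generated by the capped powers $\phi_s(u)=\min(s^{-(1-1/\beta)},u^{-(1-1/\beta)})$), the doubling of $N_b$ coming from the sandwich $(m/n)^{\beta_2}\leqs b_n/b_m\leqs(m/n)^{\beta_1}$, and the reduction of the conclusion to $N_a\asymp N_b$. Your argument for the direction $a_n\lesssim b_n$ is also correct: if $a_{n_k}=R_kb_{n_k}$ with $R_k\to\infty$, then $\int\phi_{s_k}N_a\geqs n_k a_{n_k}^{1/\beta}=R_k^{1/\beta}n_kb_{n_k}^{1/\beta}$ while $\int\phi_{s_k}N_b\asymp N_b(s_k)s_k^{1/\beta}\asymp n_kb_{n_k}^{1/\beta}$ (the upper tail needs $\beta>\beta_2$, the lower part needs $\beta_1>1$), contradicting the hypothesis. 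This works because a surplus of $N_a$ at a single scale $t$ forces $\int\phi_tN_a\geqs t^{1/\beta}N_a(t)$ to be large.

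The gap is the phrase ``by symmetry.'' The hypotheses are not symmetric in $(a_n)$ and $(b_n)$ --- all the regularity is on $(b_n)$ --- and the negation of $a_n\asymp b_n$ may instead produce $a_{n_k}/b_{n_k}\to 0$, i.e.\ a \emph{deficiency} $N_a(s_k)\leqs\delta_kN_b(s_k)$ with $\delta_k\to 0$. No single admissible test function detects this: writing $\int\phi_sN_a=s^{-(1-1/\beta)}\int_0^sN_a+\beta^{-1}\int_s^\infty t^{1/\beta-1}N_a(t)\,dt$, the hypothesis only forces this aggregate to be $\gtrsim N_b(s)s^{1/\beta}$, and the aggregate can be large even when $N_a(s)$ is tiny, because of mass of $N_a$ at scales just below $s$ (the constant plateau of $\phi_s$ on $(0,s)$ charges all the small $a_n$'s). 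Concretely, with $s_k=a_{n_k}=\epsilon_kb_{n_k}$ and the already-available bound $a_n\leqs Cb_n$, the plateau contribution to $\int\phi_{s_k}N_a$ is of order $\epsilon_k^{-(1-1/\beta)}n_kb_{n_k}^{1/\beta}$, which dominates the lower bound $\epsilon_k^{1/\beta-1/\beta_2}n_kb_{n_k}^{1/\beta}$ for $\int\phi_{s_k}N_b$ precisely because $\beta_2>1$; so no contradiction arises and your scheme stalls. The missing step is a localization argument rather than a test-function contradiction: having first proved $N_a\lesssim N_b$, one shows that the tail $\int_s^\infty t^{1/\beta-1}N_a\,dt\lesssim\delta^{1-\gamma/\beta}N_b(s)s^{1/\beta}$ is negligible when $N_a(s)\leqs\delta N_b(s)$ (using $\beta>\beta_2$), hence $\int_0^sN_a\gtrsim sN_b(s)$; then, since $\int_0^{\eta s}N_a\leqs C\int_0^{\eta s}N_b\lesssim\eta^{1-1/\beta_1}sN_b(s)$ is small for a fixed small $\eta$ (using $\beta_1>1$), the mass must sit on $[\eta s,s]$, forcing $N_a(\eta s)\gtrsim N_b(s)\asymp N_b(\eta s)$ and hence $b_n\lesssim a_n$ via doubling. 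Without this step the harder inequality is not established.
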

		\begin{proof}[\textbf{Proof of assertion \ref{3i}  of Theorem \ref{B}}] 
			Suppose that $(s_n(I_g))_n \in \cR_{\gamma, \alpha}$. Then there exists a {nonincreasing} sequence $(x_n)_n$ such that $(n^\gamma x_n)_n$ is increasing, $(n^\alpha x_n)_n$ is {nonincreasing}, and $x_n \asymp s_n(I_g)$. We have to prove that $x_n \asymp \sqrt{\tau _n(\nu_g)}$. 
			Let $\beta>0$ be such that $\beta \alpha >1$. Put $\beta _1= \beta \alpha$ and $\beta _2= \beta \gamma$. Let $h$ be a positive increasing concave function such that $h(0)= 0$ and {$h_\beta \colonequals h(t^\beta) $} is convex. The function $h_\beta$ satisfies the assumptions of Corollary \ref{Ctrace}, with $\varepsilon = 1/\beta$. Then there exists $B_1$ such that
			\[
			\displaystyle \sum _{j}h_\beta \left (\frac{1}{B_1}\sqrt{\tau _j(\nu_g)}\right ) \leqs \displaystyle \sum _{j}h_\beta (x_j).
			\]
			By the first assertion of Theorem \ref{Hardy}, we have
			\[
			x_n \asymp s_{p n}(I_g) \lesssim  \sqrt{\tau_n (\nu_g)}, \quad  n\geqs 1.
			\]
			Then there exists $B>0$ such that 
			\[
			\displaystyle \sum _{j}h_\beta\left (\frac{1}{B}\sqrt{\tau _j(\nu_g)}\right ) \leqs \displaystyle \sum _{j}h_\beta (x_j)\leqs \displaystyle \sum _{j}h_\beta\left (B\sqrt{\tau _j(\nu_g)}\right ).
			\]
			Lemma \ref{lemEE} yields $x_n^{\beta} \asymp \tau ^{\beta/2}_n (\nu_g)$, that is, $x_n \asymp \sqrt{\tau _n(\nu_g)}$.\\
			The result in the case $(\tau _n (\nu_g))_n \in \cR_{2\gamma, 2\alpha}$ can be proved in the same way.
		\end{proof}
		
		Using Theorems \ref{Hardy} and \ref{Ctrace} with  the same line of reasoning as in the proofs of Theorem 1.2 and Proposition 5.2 of \cite{BEMN}, one can obtain the following result. We omit the proof here.
		\begin{cor}
			Let $g\in VMOA$. We have\\
			\begin{enumerate}
				\item  $s_n(I_g)= o\left ( \frac{1}{n}\right ) \quad  \Longrightarrow \quad I_g =0.$
				\item $s_n(I_g)= O\left ( \frac{1}{n}\right ) \quad \iff \quad g' \in H^1.$
			\end{enumerate}
		\end{cor}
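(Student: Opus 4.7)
The approach follows Theorem~1.2 and Proposition~5.2 of \cite{BEMN}, combining Corollary~\ref{Ctrace} with Theorem~\ref{Hardy}. Applying Corollary~\ref{Ctrace} with the affine convex function $h(t)=t$ (which trivially fulfils the hypotheses, with $\varepsilon\in(0,1)$ arbitrary so that $h(t^\varepsilon)=t^\varepsilon$ is concave), and extracting the underlying partial-sum version that lies behind the passage through Corollary~3.3 of \cite{GGK}, gives the key estimate
\[
\frac{1}{B}\sum_{j=1}^n \sqrt{\tau_j(\nu_g)}\leqs \sum_{j=1}^n s_j(I_g),\qquad n\geqs 1.
\]
This inequality will be read in two complementary ways.

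For assertion~(1), suppose $s_n(I_g)=o(1/n)$. Then $\sum_{j=1}^n s_j(I_g)=o(\log n)$, and together with the monotonicity of $(\tau_j(\nu_g))$ this forces the pointwise bound $\tau_n(\nu_g)=o((\log n/n)^2)$. I would then close the residual logarithmic gap by a bootstrap between Theorem~\ref{Hardy}(i), which turns any improvement on $\tau_n$ into an improvement on $s_{pn}$ up to a factor $p$, and the trace inequality above, which in turn upgrades the improved $s_n$ back into a sharper bound on $\tau_n$. Iterating as in \cite[Prop.~5.2]{BEMN} absorbs the logarithmic loss and produces $\sum_j\sqrt{\tau_j(\nu_g)}<\infty$. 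By Luecking's criterion applied to $S_{\nu_g}$ (recall $I_g^*I_g\asymp S_{\nu_g}$), this summability is equivalent to $I_g\in \cS_1(H^2)$; the Aleman--Siskakis theorem recalled in the introduction then forces $g'\equiv 0$, so $I_g=0$.

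For assertion~(2), the forward implication $s_n(I_g)=O(1/n)\Rightarrow g'\in H^1$ uses the same partial-sum inequality, which under $s_n=O(1/n)$ yields $\sum_{j=1}^n\sqrt{\tau_j(\nu_g)}=O(\log n)$; combined with a dyadic Carleson-type description of $H^1$-membership in terms of $(\tau_n(\nu_g))$, this gives $g'\in H^1$. The reverse implication $g'\in H^1\Rightarrow s_n(I_g)=O(1/n)$ uses Theorem~\ref{Hardy}(i) together with the atomic decomposition $g'=\sum_k \lambda_k a_k$ into $H^1$-atoms supported on arcs $I_k$ with $\sum|\lambda_k|\lesssim \|g'\|_{H^1}$: each atomic piece contributes an essentially rank-one block to $I_g$ whose singular value is controlled by $|\lambda_k|$, and rearrangement delivers the $1/n$ decay.

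The main obstacle is the bootstrap in assertion~(1). The direct application of Corollary~\ref{Ctrace} with $h=\mathrm{id}$ yields only $\tau_n(\nu_g)=o((\log n/n)^2)$, which barely fails the summability $\sum\sqrt{\tau_n(\nu_g)}<\infty$ needed to invoke the Aleman--Siskakis triviality; eliminating this logarithmic slack requires the iterative scheme of \cite[Prop.~5.2]{BEMN}, alternating Theorem~\ref{Hardy}(i) with the trace inequality above until the residual $\log$-factors are absorbed.
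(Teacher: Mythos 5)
The paper itself omits this proof, deferring to the arguments of Theorem 1.2 and Proposition 5.2 of \cite{BEMN}, and your toolbox (the partial-sum trace inequality behind Theorem \ref{trace}, assertion \ref{i} of Theorem \ref{Hardy}, Luecking's criterion, and the Aleman--Siskakis rigidity $I_g\in\mathcal{S}_1\Rightarrow g'=0$) is the right one. However, the central step of your argument for assertion (1), the ``bootstrap'', cannot work. Starting from $\tau_n(\nu_g)=o\left((\log n/n)^2\right)$, assertion \ref{i} of Theorem \ref{Hardy} only returns $s_n(I_g)=o(\log n/n)$, which is \emph{weaker} than the hypothesis $s_n(I_g)=o(1/n)$ you began with; feeding this back into the trace inequality reproduces exactly the same bound on $\sqrt{\tau_n(\nu_g)}$. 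There is no contraction, so iteration never absorbs the logarithm. Nor can any purely sequence-theoretic argument do so: if $s_j=a_j=1/(j\log j)$ then $s_n=o(1/n)$ and $\sum_{j\leqs n}a_j\leqs\sum_{j\leqs n}s_j$ for all $n$, yet $\sum_j a_j=\infty$. The missing idea is a \emph{lower} bound on the partial sums, valid for every nonzero symbol. By Cauchy--Schwarz and the geometry of $R(I_{n,k})$ one has $\sqrt{\nu_g(R(I_{n,k}))/|I_{n,k}|}\gtrsim 2^{-n}\cdot\frac{1}{m(R(I_{n,k}))}\int_{R(I_{n,k})}|g'|\,dm$, whence, summing over the $2^n$ boxes of generation $n$,
\[
\sum_{k=1}^{2^n}\sqrt{\frac{\nu_g(R(I_{n,k}))}{|I_{n,k}|}}\ \gtrsim\ \int_0^{2\pi}|g'(r_ne^{i\theta})|\,d\theta,
\]
and the right-hand side is bounded below by a positive constant for all large $n$ whenever $g'\not\equiv 0$ (the integral means of $|g'|$ are nondecreasing in $r$). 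Thus each dyadic generation contributes a fixed amount, and $\sum_{j\leqs N}\sqrt{\sigma^*_j}\gtrsim\log N$ where $\sigma^*$ is the nonincreasing rearrangement of the $R$-averages. Confronting this with $\sum_{j\leqs N}\sqrt{\sigma^*_j}\lesssim\sum_{j\leqs N}s_j(I_g)=o(\log N)$ gives the contradiction directly, with no bootstrap and no passage through $\mathcal{S}_1$. This is the mechanism of \cite[Theorem 1.2]{BEMN} that the authors invoke, and it is absent from your proposal.

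Two further points. First, your ``key estimate'' is not quite what the machinery provides: Lemma \ref{0} yields the partial-sum inequality for the inner halves $R(I_j)$, while the upgrade to the full windows $W(I_j)$ (i.e.\ to $\tau_j(\nu_g)$) goes through Lemma \ref{lemLLQP}, which operates only on complete $h$-sums, not on partial sums; you should work with the $R$-quantities throughout, which is in any case what Luecking's criterion and the display above require. Second, both directions of assertion (2) are asserted rather than proved. The forward direction again carries a logarithmic loss in your version ($\sqrt{\tau_n}=O(\log n/n)$ is not $O(1/n)$), and the unspecified ``dyadic description of $H^1$'' is precisely the nontrivial content; the correct route is the same displayed lower bound, which under $s_n(I_g)=O(1/n)$ gives $\sum_{m\leqs n}\int_0^{2\pi}|g'(r_me^{i\theta})|\,d\theta=O(n)$ and hence, by monotonicity of the means, $\sup_r\int_0^{2\pi}|g'(re^{i\theta})|\,d\theta<\infty$. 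For the converse, the claim that each $H^1$-atom contributes an ``essentially rank-one block'' to $I_g$ is unsubstantiated; the clean argument is the weak-type estimate $\#\{(n,k):\nu_g(W(I_{n,k}))/|I_{n,k}|>\varepsilon^2\}\lesssim\|g'\|_{H^1}/\varepsilon$, i.e.\ $\tau_n(\nu_g)=O(1/n^2)$, followed by assertion \ref{i} of Theorem \ref{Hardy}.
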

		
		

		\bibliographystyle{plain} 
		\bigskip
		
	\end{document}